\newtheorem{thm}{Theorem}[section]
\newtheorem{rmk}[thm]{Remark}
\newtheorem{emp}[thm]{Example}
\newtheorem{lem}[thm]{Lemma}
\newtheorem{defn}[thm]{Definition}
\newtheorem{prop}[thm]{Proposition}
\newtheorem{note}[thm]{Notation}
\newtheorem{thm_intro}[thm]{Theorem}
\begin{document}
\title{On the Tannakian group scheme of $\mathcal{D}$-modules}
\author{Xiaodong Yi\footnote{yixd97@outlook.com}}
\date{}
\maketitle
\begin{abstract}
We prove a Lefschetz theorem for the Tannakian group scheme of $\mathcal{D}$-modules, in arbitrary characteristic. In characteristic $0$, We prove a Künneth formula for the Tannakian group scheme of regular singular integrable connections, and disprove it for the Tannakian group scheme of all integrable connections without any regularity assumption.
\end{abstract}
\noindent Keywords: $\mathcal{D}$-module, integrable connection, $F$-divided sheaf, Tannakian category
\section{Introduction}
Let $k$ be a perfect field and $X$ be a smooth scheme over $k$. An $\mathcal{O}_{X}$-algebra $\mathcal{D}_{X}$ of differential operators was introduced in \cite{PMIHES_1960__4__5_0}. If we further assume $X$ to be geometrically connected over $k$, the category consisting of $\mathcal{O}_{X}$-coherent left $\mathcal{D}_{X}$-modules is Tannakian. The Tannakian category is neutralized provided with a $k$-point $x\in X(k)$, and we denote the corresponding Tannakian group scheme by $\pi_{1}^{diff}(X,x)$.   \par
In characteristic $0$, the category of $\mathcal{O}_{X}$-coherent $\mathcal{D}_{X}$-modules is equivalent to the category of integrable connections. There is the full Tannakian subcategory of regular singular integrable connections, which by Tannakian duality corresponds to a quotient $\pi_{1}^{alg}(X,x)$ of $\pi_{1}^{diff}(X,x)$. If $X$ is proper over $k$, the natural quotient map $\pi_{1}^{diff}(X,x)\rightarrow \pi_{1}^{alg}(X,x)$ is an isomorphism. We will discuss the  Lefschetz and Künneth properties for $\pi_{1}^{diff}$ and $\pi_{1}^{alg}$. To summarize:
\begin{thm_intro}[Proposition \ref{lef_alg}, Proposition \ref{kun_alg}, Example \ref{kun_diff}]
Assume $k$ is a field of characteristic $0$.
\begin{enumerate}
\item Let $X$ be a geometrically connected, smooth, and projective scheme over $k$, $H$ be a smooth ample divisor, and $x\in H(k)\subset X(k)$ be a base point. Then the natural morphism 
\[\pi_{1}^{alg}(H,x)\rightarrow \pi_{1}^{alg}(X,x)\]
is faithfully flat if $X$ has dimension at least $2$, and is an isomorphism if $X$ has dimension at least $3$.
\item \label{2}Let $X$ (resp. $Y$) be a geometrically connected and smooth scheme over $k$, with a $k$-point $x\in X(k)$ (resp. $y\in Y(k)$). The natural morphism 
\[\pi_{1}^{alg}(X\times Y, x\times y)\rightarrow \pi^{alg}_{1}(X,x)\times\pi^{alg}_{1}(Y,y)\]
is an isomorphism. 
\item Let $X$ (resp. $Y$) be a geometrically connected and smooth scheme over $k$, with a $k$-point $x\in X(k)$ (resp. $y\in Y(k)$). The natural morphism 
\[\pi_{1}^{diff}(X\times Y, x\times y)\rightarrow \pi^{diff}_{1}(X,x)\times\pi^{diff}_{1}(Y,y)\]
is not an isomorphism in general. 
\end{enumerate}
 \end{thm_intro} 
To prove the Lefschetz property and the Künneth formula for $\pi_{1}^{alg}$, we will use a spreading and specialization trick to reduce to the case $k=\mathbb{C}$, which follows from the Riemann-Hilbert correspondence and the topological case. Such an extra trick is necessary, as $\pi_{1}^{diff}$ and $\pi_{1}^{alg}$ are in general not compatible with extension of scalars (Example \ref{base_null}). To give a counterexample to the Künneth formula for $\pi_{1}^{diff}$, we will use the Levelt-Turrittin decomposition and the notion of admissible formal decompositions over a higher dimensional base (Definition \ref{admissible}). \par
We remark that, assuming further at least one of $X$ and $Y$ is proper over $k$, the Künneth formula for $\pi_{1}^{diff}$ does hold. This was proved in \cite{zhang2014homotopy} by Zhang, in characteristic $0$, and in \cite{dos2015homotopy} by dos Santos, in arbitrary characteristic. \par
The Lefschetz theorem holds in characteristic $p$ as well: 
\begin{thm_intro}[Proposition \ref{lef_str}]
 Let $X$ be a geometrically connected, smooth, and projective scheme over a perfect field $k$ of characteristic $p>0$, $H$ be a smooth ample divisor, and $x\in H(k)\subset X(k)$ be a base point. Then the natural morphism 
\[\pi_{1}^{diff}(H,x)\rightarrow \pi_{1}^{diff}(X,x)\]
is faithfully flat if $X$ has dimension at least $2$, and is an isomorphism if $X$ has dimension at least $3$.
\end{thm_intro}
In fact, the part concerning faithful flatness was already proved in \cite{esnault2016simply}, and we will proceed following ideas there, using infinitesimal crystals. The only new ingredient will be an algebraization theorem (for example, see \cite[\href{https://stacks.math.columbia.edu/tag/0EL7}{Tag 0EL7}]{stacks-project}), which has already played a role in the proof of the Lefschetz theorem for étale fundamental groups. 
 
\subsection*{\normalsize Conventions and Notations}
\begin{description}
\item[Base field:] we  use $k$ to denote a perfect field. All schemes are assumed to be separated and of finite type over $k$. 
\item[Vector bundle:] by a vector bundle on a scheme $X$, we mean a locally free sheaf of finite rank. 
\item[Connection:] for a smooth morphism $X\rightarrow S$, we write $\mathrm{Conn}(X/S)$ to be the category of integrable connections on $X$ relative to $S$. That is to say, the category consists of $(\mathcal{E},\nabla)$ with $\mathcal{E}$ being a coherent sheaf on $X$, and $\nabla$ being an $\mathcal{O}_{S}$-linear
\[\mathcal{E}\rightarrow \mathcal{E}\otimes\Omega_{X/S}^{1}\]
satisfying the Leibniz rule and $\nabla^{2}=0$. \par
 Let $\mathcal{F}\in \mathrm{Conn}(X/S)$ and $\mathcal{G}\in \mathrm{Conn}(Y/S)$, we write $\mathcal{F}\boxtimes\mathcal{G}\in \mathrm{Conn}(X\times_{S}Y/S)$ to be $p_{X}^{*}\mathcal{F}\otimes p_{Y}^{*}\mathcal{G}$, where $p_{X}$ (resp. $p_{Y}$) is the natural projection $X\times_{S}Y \rightarrow X$ (resp. $X\times_{S}Y \rightarrow Y$). \par
Similarly, for a smooth morphism $X\rightarrow S$ and a divisor $D$ of $X$ which is strict normal crossings relative to $S$, we write $\mathrm{Conn}(X/S,D)$ to be the category of logarithmic integrable connections on $X$ with poles along $D$ relative to $S$. That is to say, the category consists of $(\mathcal{E},\nabla)$ with $\mathcal{E}$ being a coherent sheaf on $X$, and $\nabla$ being an $\mathcal{O}_{S}$-linear
\[\mathcal{E}\rightarrow \mathcal{E}\otimes\Omega_{X/S}^{1}(log\, D)\]
satisfying the Leibniz rule and $\nabla^{2}=0$.  \par
If $S=\mathrm{Spec}\,R$ is affine, we write $\mathrm{Conn}(X/R)$ (resp. $\mathrm{Conn}(X/R,D)$) for $\mathrm{Conn}(X/S)$ (resp.  $\mathrm{Conn}(X/S,D)$). 
\item[Generation of  subcategory:] let $\mathcal{C}$ be a Tannakian category, and let $T$ be a set of objects in $\mathcal{C}$. We denote $\langle T\rangle$ to be the smallest full Tannakian subcategory of $\mathcal{C}$ containing $T$. 
\end{description}
\section{Preliminaries}
\begin{defn}[\cite{PMIHES_1960__4__5_0}]
Let $X$ be smooth over $k$. We inductively define an increasing sequence $\mathcal{D}_{X}^{\leq n},n\geq 0$, of sheaves of linear differential operators of order at most $n$ on $X$: let $\mathcal{D}_{X}^{\leq 0}=\mathcal{O}_{X}$, acting on $\mathcal{O}_{X}$ simply by multiplication. Once $\mathcal{D}_{X}^{\leq n}$ is defined, we define $\mathcal{D}_{X}^{\leq n+1}$  to be the subsheaf  of $\mathcal{E}nd_{k}(\mathcal{O}_{X})$ locally consisting of operators $D$, such that the operator $D_{a}$
\[D_{a}(x)=D(ax)-aD(x)\]
lies in $\mathcal{D}_{X}^{\leq n}$ for any  local section $a$ of  $\mathcal{O}_{X}$. 
\end{defn}
\begin{defn} A (left)  $\mathcal{D}_{X}$-module on $X$ is a quasi-coherent $\mathcal{O}_{X}$-module $\mathcal{E}$ on $X$, together with a morphism of algebras:
\[\mathcal{D}_{X}\rightarrow \mathcal{E}nd_{k}(\mathcal{E}). \]
\end{defn}
\subsection{Characterisitc $0$}
In this subsection, we work over $k$ of characteristic $0$. \par  For a smooth scheme over $k$, it is easy to see that the category of $\mathcal{O}_{X}$-coherent $\mathcal{D}_{X}$-modules is equivalent to the category $\mathrm{Conn}(X/k)$. It is a well-known fact (Proposition 8.8 \cite{PMIHES_1970__39__175_0}) that the underlying coherent sheaf $\mathcal{E}$ of any connection $(\mathcal{E},\nabla)$ is a vector bundle. When $X$ is not proper over $k$, it is too much to require an integrable connection to be regular at infinity. Instead, we ask for connections with the mildest singularities at infinity. 
\begin{defn}
Over an algebraically closed field $k$ of characteristic $0$, an integrable connection $(\mathcal{E},\nabla)\in\mathrm{Conn}(X/k)$ is regular singular, if for one  smooth compactification $\bar{X}$ of $X$, with $D=\bar{X}\setminus X$ being strict normal crossing, $(\mathcal{E},\nabla)$ extends to a logarithmic connection $(\tilde{E},\tilde{\nabla})\in \mathrm{Conn}(\bar{X}/k,D)$.  \par
In general,  let $k$ a field of characteristic $0$. An integrable connection in $\mathrm{Conn}(X/k)$ is regular singular if its extension of scalars in $\mathrm{Conn}(X_{\bar{k}}/\bar{k})$ is regular singular. 
\end{defn}
\begin{rmk}
It can be verified that the full subcategory of regular singular connections is closed under taking subquotients, tensor products, and duals in $\mathrm{Conn}(X/k)$. 
\end{rmk}

\begin{rmk}
Over an algebraically closed field $k$, we compare our definition with that in \cite{andre2020rham}. Without losing generality we assume $X$ to be connected. Fix a smooth compactification $(\bar{X},D)$ of $X$ with strict normal crossings divisor $D$. By Definition 10.2.1, Proposition 10.2.3, Definition 11.2.1 \cite{andre2020rham}, an integrable connection $(\mathcal{E},\nabla)$ is said to be regular along $D$, if for each irreducible component $D_{i}$ of $D$, $\mathcal{E}\otimes_{\mathcal{O}_{X}}\kappa(X)$ admits an $\mathcal{O}_{\bar{X},D_{i}}$-lattice, stabilized by $\mathcal{T}_{\bar{X},D_{i}}(log\,D)$. Here $\kappa(X)$ is the function field of $X$, $\mathcal{O}_{\bar{X},D}$ is the local ring at the generic point of $D_{i}$, $\mathcal{T}_{\bar{X}}(log\, D)$ is the dual of $\Omega_{\bar{X}}^{1}(log\, D)$, and $\mathcal{T}_{X,D_{i}}(log\,D)=\mathcal{T}_{\bar{X}}(log\, D)\otimes_{\mathcal{O}_{\bar{X}}} \mathcal{O}_{\bar{X},D_{i}}$. Our definition implies theirs since replacing $(\tilde{E},\nabla)$ by its reflexive hull $(\tilde{E}^{\vee\vee},\nabla)$, we can assume $\tilde{E}$ to be torsion-free, and  we can restrict $(\tilde{E},\nabla)$  to the generic point of each $D_{i}$ to obtain a lattice desired. Conversely, suppose $(\tilde{E},\nabla)$ is regular along $D$ in the sense of \cite{andre2020rham}, Deligne's canonical extension (originally in \cite{deligne1970equations}, with a purely algebraic treatment as Theorem 11.2.2 \cite{andre2020rham}) ensures the existence of a global logarithmic model.  To summarize, over an algebraically closed field, our definition coincides with that in \cite{andre2020rham}. \par
\end{rmk}
\begin{rmk}
The notion of being regular singular is independent of the compactification. To show this, it suffices to assume $k$ to be algebraically closed. By the weak factorization theorem, any two such compactifications are related by a zigzag of blow-ups and blow-downs with smooth centers supported in boundary divisors. We reduce to proving that, if $\pi: (\bar{X}',D')\rightarrow (\bar{X},D)$ is a blow-up with center supported in $D$ with $D'=\pi^{-1}D$, $(\mathcal{E},\nabla)$ is regular along $D$ if and only if it is regular along $D'$.  The  ``only if'' part is obvious. To prove the  ``if'' part, we only have to notice that the local rings at the generic points of $D_{i}$ and of its strict transform on $\tilde{X}$ are the same. 
\end{rmk}
Let $X$ be geometrically connected and smooth over $k$. The category $\mathrm{Conn}(X/k)$ of all integrable connections and its full subcategory of regular singular connections are both Tannakian. A $k$-point $x\in X(k)$ (if it exists) defines a fiber functor by sending $(\mathcal{E},\nabla)$ to the fiber $\mathcal{E}_{x}$ at $x$.
\begin{note}
We use $\pi_{1}^{diff}(X,x)$ (resp. $\pi_{1}^{alg}(X,x)$) to denote the Tannakian group scheme of all integrable connections (resp. regular singular integrable connections).
\end{note}
The following is known as Deligne's Riemann-Hilbert correspondence. 
\begin{thm}[\cite{deligne1970equations}, \cite{10.1007/978-1-4613-9649-9_3}]
Let $X$ be connected and smooth over $k=\mathbb{C}$ and $x\in X(\mathbb{C})$ be a point. Then $\pi_{1}^{alg}(X,x)$ is the algebraic envelope of the topological fundamental group $\pi_{1}^{top}(X^{an},x)$. 
\end{thm}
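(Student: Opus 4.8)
The plan is to identify the neutral Tannakian category of regular singular integrable connections on $X$, equipped with the fiber functor at $x$, with the category $\mathrm{Rep}_{\mathbb{C}}(\pi_1^{top}(X^{an},x))$ of finite-dimensional complex representations, equipped with its forgetful fiber functor. Recall that the algebraic envelope (pro-algebraic completion) of an abstract group $\Gamma$ is, by definition and Tannakian duality, the affine group scheme $\underline{\mathrm{Aut}}^\otimes(\omega_\Gamma)$ attached to the neutral Tannakian category $\mathrm{Rep}_{\mathbb{C}}(\Gamma)$ with its forgetful fiber functor $\omega_\Gamma$. Thus, once we produce a $\mathbb{C}$-linear tensor equivalence between the two categories that is compatible with the fiber functors, Tannakian duality yields the asserted isomorphism of group schemes.

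I would assemble this equivalence as a composite of three functors. First, analytification sends a regular singular algebraic integrable connection $(\mathcal{E},\nabla)$ to a holomorphic integrable connection $(\mathcal{E}^{an},\nabla^{an})$ on the complex manifold $X^{an}$. Second, taking the sheaf $\ker\nabla^{an}$ of horizontal sections produces a local system, with integrability of $\nabla^{an}$ guaranteeing, via the holomorphic Frobenius theorem, that $\ker\nabla^{an}$ is locally constant of rank equal to that of $\mathcal{E}^{an}$; conversely $\mathcal{L}\mapsto(\mathcal{L}\otimes_{\mathbb{C}}\mathcal{O}_{X^{an}},\,1\otimes d)$ is a quasi-inverse, so holomorphic integrable connections and local systems on $X^{an}$ are equivalent. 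Third, since $X^{an}$ is connected and locally simply connected, monodromy furnishes the classical equivalence between local systems and $\mathrm{Rep}_{\mathbb{C}}(\pi_1^{top}(X^{an},x))$, with the stalk at $x$ corresponding to the underlying representation space. Each functor is visibly compatible with tensor products and duals, and under them the fiber $\mathcal{E}_x$, the stalk $(\ker\nabla^{an})_x$, and the underlying vector space of the representation are canonically identified, so the composite respects the fiber functor at $x$.

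The main obstacle — and the substance of Deligne's theorem — is the first step: that analytification is a tensor equivalence from the category of \emph{regular singular} algebraic integrable connections onto the category of \emph{all} holomorphic integrable connections on $X^{an}$. Essential surjectivity uses Deligne's canonical extension: given a holomorphic connection, one extends it across a strict normal crossings boundary $D=\bar{X}^{an}\setminus X^{an}$ to a logarithmic connection with residues in a fixed fundamental domain, and then applies GAGA on the projective compactification $\bar{X}$ to algebraize it, producing a regular singular model. Full faithfulness likewise reduces, via the logarithmic extension and GAGA, to comparing global sections of algebraic and analytic coherent sheaves on $\bar{X}$. This is exactly where regularity is indispensable: without it analytification remains fully faithful but is far from essentially surjective, the analytic category being strictly larger. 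Granting Deligne's equivalence, the three functors compose to the required equivalence of neutral Tannakian categories, and Tannakian duality completes the identification of $\pi_1^{alg}(X,x)$ with the algebraic envelope of $\pi_1^{top}(X^{an},x)$.
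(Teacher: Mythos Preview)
The paper does not supply its own proof of this theorem; it is stated with citations to Deligne and to Malgrange, and used as input. Your outline is exactly the standard argument found in those references---analytification plus Deligne's canonical extension and GAGA to identify regular singular algebraic connections with all holomorphic ones, followed by the classical horizontal-sections and monodromy equivalences---so there is nothing to compare and your sketch is correct.
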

The following example suggests that the group schemes $\pi_{1}^{diff}$ and $\pi_{1}^{alg}$ are not compatible with extension of scalars in general. This subtlety prevents us from studying their properties by base change to $k=\mathbb{C}$ directly and by reduction to the topological case. 
\begin{emp}
\label{base_null}
Let $k\hookrightarrow k'$ be a field extension. We have a natural base change morphism
\begin{equation}\label{scalar}\pi_{1}^{*}(X\times_{k}k',x)\rightarrow \pi_{1}^{*}(X,x)\times_{k}k', *\in\{alg,diff\},\end{equation}
which is in general not an isomorphism. We give two examples.
\begin{enumerate}
\item Assume $k=\bar{k}$. For an abelian group $A$, we use $\mathbb{D}_{k}(A)$ to denote the split multiplicative affine group scheme over $k$, such that its group of characters is $A$. We have $\pi_{1}^{alg}(\mathbb{G}_{m})\cong \mathbb{D}_{k}(k/\mathbb{Z})\times_{k}\mathbb{G}_{a}$ (see \cite{hai2024algebraic} for a proof), from which we see that the base change morphism \ref{scalar} is not an isomorphism in general. Indeed, suppose $k'$ is an another algebraically closed field with $k\hookrightarrow k'$.  The group of characters of the left hand side of the base change morphism \ref{scalar} is $k'/\mathbb{Z}$, while the group of characters of the right hand side is $k/\mathbb{Z}$. \par
\item Assume $k=\bar{k}$. Let $X$ be a  connected, smooth and proper curve over $k$ with genus $\geq 1$. In this case $\pi_{1}^{diff}(X)=\pi_{1}^{alg}(X)$ and we compute its group of characters, say $\mathbb{X}$. Characters are $1$-dimensional representations of $\pi_{1}^{alg}(X)$, and by Tannakian duality they correspond bijectively to connections of rank $1$. We have
\[0\rightarrow H^{0}(X,\Omega_{X}^{1})\rightarrow \mathbb{X}\rightarrow Pic^{0}(X)\rightarrow 0.\]
To interprete this, a line bundle $\mathcal{L}$ carries a connection if and only if it is of degree $0$, i.e, an element in $Pic^{0}(X)$, and all connections on $\mathcal{L}$ form a torsor under the additive group $H^{0}(X,\Omega_{X}^{1})$, by sending $(\mathcal{L},\nabla)$ to $(\mathcal{L},\nabla)\otimes (\mathcal{O}_{X}, d+\omega)$, $\omega\in H^{0}(X,\Omega_{X}^{1})$. 
By the same reason as the first example, we see that the base change morphism is not an isomorphism for a smooth proper curve with genus $\geq 1$. \par
\end{enumerate}

\end{emp}
\subsection{Characteristic $p$}
In this subsection we assume $k$ is perfect of characteristic $p>0$. For a scheme $X$ over $k$, we use $F$ to denote the absolute Frobenius morphism. 
\begin{defn}
Let $X$ be a scheme over $k$. The category $\mathrm{Fdiv}(X)$ of $F$-divided sheaves on $X$ is the category whose objects are sequences $(\mathcal{E}_{n},\sigma_{n})_{n\geq 0}$ where $\mathcal{E}_{n}$ are coherent sheaves with isomorphisms $\sigma_{n}: F^{*}\mathcal{E}_{n+1}\cong \mathcal{E}_{n}$. Arrows between two objects $(\mathcal{E}_{n},\sigma_{n})_{n\geq 0}$ and $(\mathcal{F}_{n},\tau_{n})_{n\geq 0}$ are projective systems $(\alpha_{n})_{n\geq 0}: \mathcal{E}_{n}\rightarrow \mathcal{F}_{n}$ of morphisms between $\mathcal{O}_{X}$-modules, verifying $\tau_{n}\circ F^{*}(\alpha_{n+1})=\alpha_{n}\circ\sigma_{n}$.
\end{defn}
For a $F$-divided sheaf $(\mathcal{E}_{n},\sigma_{n})_{n\geq 0}$, the isomorphisms $\sigma_{n}$ will not play a significant role in the sequel, and we will omit them from notions by simply writing $(\mathcal{E}_{n})_{n\geq 0}$ for a $F$-divided sheaf.  We also remark that for a $F$-divided sheaf $(\mathcal{E}_{n})_{n\geq 0}$, the components $\mathcal{E}_{n}$ are automatically vector bundles (Lemma 4.2 \cite{10.2748/tmj.20200727}).  \par
Let $X$ be geometrically connected and smooth over $k$, and let $x\in X(k)$ be a $k$-point. The category $\mathrm{Fdiv}(X)$ is neutral Tannakian, with the fiber functor given by sending $(\mathcal{E}_{n})_{n\geq 0}$ to $\mathcal{E}_{0,x}$. 
\begin{note}
We write $\pi_{1}^{diff}(X,x)$ to be the Tannakian group scheme of $F$-divided sheaves.
\end{note} 
The notation $\pi_{1}^{diff}$ is justified by the following theorem:
\begin{thm}[\cite{ASNSP_1975_4_2_1_1_0}\cite{AIF_2013__63_6_2267_0}]
\label{df}
Let $X$ be smooth over $k$. There is an equivalence between the full subcategory of $\mathcal{D}_{X}$-modules consisting of $\mathcal{O}_{X}$-coherent objects,  and the category $\mathrm{Fdiv}(X)$.  To be explicit, the functor going from the category of $\mathcal{D}_{X}$-modules to $Fdiv(X)$ is defined as follows: starting from a $\mathcal{D}_{X}$-module $\mathcal{E}$ we first set $\mathcal{E}_{0}=\mathcal{E}$.  For $n>0$ consider the subsheaf $\mathcal{E}_{n}$ of $\mathcal{E}$ consisting of local sections  annihilated by differential operators $D$ of order $<p^{n}$ with $D(1)=0$. The subsheaf $\mathcal{E}_{n}$ is equipped with a coherent $\mathcal{O}_{X}$-module structure by $a\cdot s=a^{p^{n}}s$, for $a\in\mathcal{O}_{X}$ and $s\in \mathcal{E}_{n}$, where the right hand side is the multiplication of $a^{p^{n}}$ on $s$, by considering $s$ as a local section of $\mathcal{E}$.  We have natural isomorphisms $F^{*}\mathcal{E}_{n+1}\cong\mathcal{E}_{n}$. 
Moreover, the equivalence is  an equivalence of Tannakian categories. 
\end{thm}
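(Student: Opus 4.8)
\section*{Proof proposal}

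The plan is to build the equivalence out of iterated Cartier descent, organizing the full $\mathcal{D}_{X}$-action as a compatible tower of connections with vanishing $p$-curvature. First I would record the local structure of $\mathcal{D}_{X}$ in characteristic $p$: \'etale-locally with coordinates $x_{1},\dots,x_{d}$, the sheaf $\mathcal{D}_{X}$ is the free $\mathcal{O}_{X}$-module on the Hasse--Schmidt operators $\partial_{i}^{[m]}$, subject to $\partial_{i}^{[a]}\partial_{i}^{[b]}=\binom{a+b}{a}\partial_{i}^{[a+b]}$, and $\mathcal{D}_{X}=\varinjlim_{n}\mathcal{D}_{X}^{<p^{n}}$, where $\mathcal{D}_{X}^{<p^{n}}$ denotes the operators of order $<p^{n}$. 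The basic computation is that a local section $f$ of $\mathcal{O}_{X}$ is annihilated by every $D$ of order $<p^{n}$ with $D(1)=0$ if and only if $f\in\mathcal{O}_{X}^{p^{n}}$ (by Lucas's theorem applied to the coefficients $\binom{a}{m}$ appearing in $\partial^{[m]}(x^{a})$). Hence for $\mathcal{E}=\mathcal{O}_{X}$ one finds $(\mathcal{O}_{X})_{n}=\mathcal{O}_{X}^{p^{n}}$, which under $a^{p^{n}}\leftrightarrow a$ is exactly the twisted module of the statement; this both motivates and validates the $\mathcal{O}_{X}$-action $a\cdot s=a^{p^{n}}s$ on $\mathcal{E}_{n}$, since $a^{p^{n}}$ is itself level-$n$ flat and the flat sections $\mathcal{E}_{n}$ form a module over the flat functions $\mathcal{O}_{X}^{p^{n}}$.

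The crux is to show $F^{*}\mathcal{E}_{n+1}\xrightarrow{\sim}\mathcal{E}_{n}$. A $\mathcal{D}_{X}$-module is in particular a module with integrable connection, and the relation $(\partial_{i})^{p}=p!\,\partial_{i}^{[p]}=0$ in $\mathcal{D}_{X}$ forces $\nabla_{\partial_{i}}^{p}=0$; since the restricted power $\partial_{i}^{[p]}$ of the derivation $\partial_{i}$ also acts as $0$, the $p$-curvature of the underlying connection vanishes identically. Thus Cartier descent applies and $\mathcal{E}_{0}=\mathcal{E}$ descends along $F$ to $\mathcal{E}^{\nabla}=\mathcal{E}_{1}$, giving $F^{*}\mathcal{E}_{1}\cong\mathcal{E}_{0}$. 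The operators of order $\geq p$ descend to endow $\mathcal{E}_{1}$ with a residual $\mathcal{D}_{X}$-module structure, and iterating Cartier descent level by level produces the whole tower with $F^{*}\mathcal{E}_{n+1}\cong\mathcal{E}_{n}$. Coherence and local freeness of each $\mathcal{E}_{n}$ follow because $F$ is finite and faithfully flat ($k$ perfect), so that coherence propagates down the tower, local freeness being exactly the infinite Frobenius-divisibility recorded in Lemma 4.2 of \cite{10.2748/tmj.20200727}.

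For the quasi-inverse, from an $F$-divided sheaf $(\mathcal{E}_{n})$ the isomorphisms $F^{*}\mathcal{E}_{n+1}\cong\mathcal{E}_{n}$ equip $\mathcal{E}_{0}$ with a canonical $p$-curvature-zero connection at each level, and assembling these descent data across all levels yields a stratification, i.e.\ a $\mathcal{D}_{X}$-module structure on $\mathcal{E}_{0}$ (Cartier descent run backwards, using $\mathcal{D}_{X}=\varinjlim\mathcal{D}_{X}^{<p^{n}}$, the level-$n$ datum recovering the action of $\mathcal{D}_{X}^{<p^{n}}$). That the two functors are mutually quasi-inverse is then formal. For the Tannakian statement one checks that the flat-section functor is monoidal, $(\mathcal{E}\otimes\mathcal{F})_{n}\cong\mathcal{E}_{n}\otimes\mathcal{F}_{n}$ and compatibly with duals, so tensor products and duals match termwise; the unit $\mathcal{O}_{X}$ maps to the trivial tower, and both fiber functors are $\mathcal{E}\mapsto\mathcal{E}_{0,x}$, whence the equivalence is one of neutral Tannakian categories.

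The step I expect to be the main obstacle is the reassembly in the crux: one must verify that the level-by-level Cartier descents genuinely reconstitute the \emph{full} $\mathcal{D}_{X}$-action rather than merely a connection. In characteristic $p$ a flat connection is far from determining a $\mathcal{D}_{X}$-module --- the higher divided-power operators carry independent information --- so the real content is that the compatible system $(\mathcal{E}_{n})$ of descents is equivalent to the full stratification. Controlling how the operators of order $<p^{n+1}$ relate to those of order $<p^{n}$ across $F$, and confirming that the reconstructed operators satisfy all the defining relations of $\mathcal{D}_{X}$, is where the essential work lies.
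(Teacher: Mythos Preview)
The paper does not prove this theorem at all: it is stated with citations to \cite{ASNSP_1975_4_2_1_1_0} and \cite{AIF_2013__63_6_2267_0} and used as a black box. So there is no ``paper's own proof'' to compare against; your proposal is in effect a reconstruction of the classical argument.

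As such a reconstruction, your outline is essentially the standard one (iterated Cartier descent, going back to Katz and Gieseker), and the strategy is sound. One point of notation you should clean up: you use $\partial_{i}^{[p]}$ both for the Hasse--Schmidt divided-power operator in $\mathcal{D}_{X}$ and, a line later, for the $p$-th restricted power of the derivation $\partial_{i}$ in the sense of restricted Lie algebras. These are different objects: the Hasse--Schmidt operator $\partial_{i}^{[p]}$ is a genuine nonzero order-$p$ operator and certainly does \emph{not} act as zero on a general $\mathcal{D}_{X}$-module, whereas the restricted $p$-th power of the coordinate derivation $\partial_{i}$, viewed as a vector field, is zero. The $p$-curvature computation only needs the latter together with $(\partial_{i})^{p}=0$ in $\mathcal{D}_{X}$, so the conclusion is correct, but as written the sentence ``the restricted power $\partial_{i}^{[p]}$ of the derivation $\partial_{i}$ also acts as $0$'' invites misreading.

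The point you flag as the main obstacle is indeed the substantive one: that after Cartier descent the operators of order $\geq p$ organize into a full $\mathcal{D}_{X}$-module structure on the descended sheaf, and that iterating really recovers the whole tower. Concretely one uses that Frobenius pullback identifies $\mathcal{D}_{X}^{<p^{n}}$-modules on the target with $\mathcal{D}_{X}^{<p^{n+1}}$-modules of vanishing $p$-curvature on the source (a level-shifting statement that can be checked in local coordinates via the identity $F^{*}\partial_{i}^{[m]}=\partial_{i}^{[pm]}$ on $F^{*}$ of a module). Making that precise is exactly what the cited references do; your sketch correctly isolates where the work lies.
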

\begin{rmk}
\label{base_p}
In characteristic $p$, for a field extension $k\hookrightarrow k'$, the base change morphism
\[\pi_{1}^{diff}(X\times_{k}k',x)\rightarrow \pi_{1}^{diff}(X,x)\times_{k}k'\]
is not an isomorphism in general. See Corollary 23 \cite{DOSSANTOS2007691}. 
\end{rmk}
We will use the point of view of infinitesimal crystals:
\begin{prop}[Proposition 2.11 \cite{berthelot2015notes}]
\label{crystal}
Let $X$ be smooth over $k$. There is an equivalence of categories between:
\begin{enumerate}
\item $\mathcal{O}_{X}$-coherent $\mathcal{D}_{X}$-modules.
\item Coherent infinitesimal crystals on $X$. That is to say, a compatible family of coherent sheaves $\mathcal{E}_{T}$ on $T$ for each nilpotent thickening $U\hookrightarrow T$ of an open set $U$ of $X$, with transitive isomorphisms $u^{*}\mathcal{E}_{T}\xrightarrow{\sim}\mathcal{E}_{T'}$ for any $u: (U',T')\rightarrow (U,T)$.
\end{enumerate}
\end{prop}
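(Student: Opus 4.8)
The plan is to factor the equivalence through the intermediate notion of an $\mathcal{O}_X$-module equipped with a \emph{stratification}, and then to exploit the fact that an infinitesimal crystal is determined by, and reconstructible from, its restriction to the infinitesimal neighborhoods of the diagonal. Write $\mathcal{P}^n_{X}=\mathcal{O}_{X\times_k X}/\mathcal{I}^{n+1}$ for the sheaf of principal parts of order $n$, where $\mathcal{I}$ is the ideal of the diagonal $\Delta\colon X\hookrightarrow X\times_k X$; this is the structure sheaf of the $n$-th infinitesimal neighborhood $X^{(n)}$ of $\Delta$, a nilpotent thickening of $X$, and it carries two $\mathcal{O}_X$-structures coming from the projections $p_1,p_2$. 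First I would recall the classical duality $\mathcal{D}^{\leq n}_{X}\cong\mathcal{H}om_{\mathcal{O}_X}(\mathcal{P}^n_X,\mathcal{O}_X)$ for the left structure, valid because $X$ is smooth. Under this duality, endowing an $\mathcal{O}_X$-coherent $\mathcal{E}$ with a $\mathcal{D}_X$-module structure is equivalent to giving a compatible system of $\mathcal{O}_{X^{(n)}}$-linear isomorphisms $\epsilon_n\colon p_2^*\mathcal{E}|_{X^{(n)}}\xrightarrow{\sim}p_1^*\mathcal{E}|_{X^{(n)}}$ reducing to the identity on $X$ and satisfying the cocycle condition on the triple infinitesimal neighborhoods inside $X\times_k X\times_k X$; coherence of $\mathcal{E}$ forces these maps to be isomorphisms. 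Call such data a stratification.

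Next I would produce a crystal from a stratified module $(\mathcal{E},\{\epsilon_n\})$. Given a nilpotent thickening $(U,T)$ with $U\subseteq X$ open, smoothness of $U$ over $k$ provides, locally on $U$, a retraction $r\colon T\to U$ with $r|_U=\mathrm{id}$, and I set $\mathcal{E}_T:=r^*\mathcal{E}=\mathcal{O}_T\otimes_{\mathcal{O}_U}\mathcal{E}$. The point is independence of the lift: if $r,r'$ are two retractions, the pair $(r,r')$ carries the nilpotent ideal of $U$ in $T$ into the ideal of the diagonal, hence factors through a map $T\to U^{(n)}$ for $n$ bounding the nilpotency, and pulling back $\epsilon_n$ along this map yields a canonical isomorphism $r'^*\mathcal{E}\xrightarrow{\sim}r^*\mathcal{E}$. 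The cocycle condition guarantees that these comparison isomorphisms glue to a well-defined $\mathcal{E}_T$ and make the transition maps $u^*\mathcal{E}_T\xrightarrow{\sim}\mathcal{E}_{T'}$ transitive. Conversely, from a crystal $\mathcal{E}$ I set $\mathcal{E}_0:=\mathcal{E}_X$ and recover $\epsilon_n$ by applying the crystal to the two projections $p_1,p_2\colon X^{(n)}\to X$, regarded as morphisms of thickenings $(X,X^{(n)})\to(X,X)$: the induced isomorphisms $p_1^*\mathcal{E}_0\cong\mathcal{E}_{X^{(n)}}\cong p_2^*\mathcal{E}_0$ compose to $\epsilon_n$, and transitivity of the crystal data over the triple neighborhood gives the cocycle identity. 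These two constructions are mutually inverse, and combined with the first step yield the asserted equivalence.

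The main obstacle is the well-definedness of $\mathcal{E}_T$ in characteristic $p$. Over a field of characteristic $0$ one may simply exponentiate the connection through the Taylor series $\sum_{\alpha}\partial^{\alpha}\otimes\xi^{\alpha}/\alpha!$, but in characteristic $p$ the divided powers $1/\alpha!$ are unavailable, so the comparison between two coordinate lifts cannot be written by exponentiation. Instead the argument must remain at the level of the stratification isomorphisms $\epsilon_n$ themselves, using that an ordinary (non-divided-power) infinitesimal thickening is governed precisely by the full ring $\mathcal{D}_X$ rather than by its divided-power subring—this is exactly why the target is the infinitesimal site and not the crystalline site. Concretely, the comparison of two lifts passes through $T\times_U T$, whose structure sheaf receives a map from $\mathcal{P}^n_X$, and it is on this fiber product that $\epsilon_n$ and its cocycle condition do the work. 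Verifying that the resulting $\mathcal{E}_T$ is independent of the chosen coordinates and glues over an open cover of $X$—that is, checking full functoriality in $(U,T)$—is the technical heart of the proof; the remaining verification that the two functors are mutually quasi-inverse is then formal.
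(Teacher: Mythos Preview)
The paper does not supply a proof of this proposition at all: it is quoted as Proposition~2.11 of \cite{berthelot2015notes} and used as a black box. Your sketch is correct and is in fact the classical argument that Berthelot gives---factoring through stratifications via the duality $\mathcal{D}_X^{\leq n}\cong\mathcal{H}om_{\mathcal{O}_X}(\mathcal{P}^n_X,\mathcal{O}_X)$, and then passing between stratifications and crystals by evaluating on the infinitesimal neighborhoods of the diagonal in one direction and using local retractions together with the cocycle condition in the other. Your remark distinguishing the infinitesimal site from the crystalline site, and correspondingly the full ring $\mathcal{D}_X$ from its PD-subring, is exactly the right point to emphasize in positive characteristic. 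One small quibble: the fact that the $\epsilon_n$ are isomorphisms is not really a consequence of coherence but of the fact that each $\epsilon_n$ reduces to the identity modulo the nilpotent diagonal ideal; coherence is only needed to invoke Nakayama, or alternatively one observes directly that the flip of the two factors furnishes an inverse.
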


\section{A Lefschetz theorem}
In this section we prove a Lefschetz theorem for the Tannakian group scheme of $D$-modules. We assume the existence of a $k$-point $x\in H(k)\subset X(k)$, and our Tannakian categories are neutralized by the fiber functor defined by $x$. Therefore, the base point $x$ will be omited from notations for simplicity. Let $X$ be a geometrically connected, smooth, and projective scheme over $k$, and $H$ be a smooth ample divisor. We pull-back $\mathcal{D}_{X}$-modules on $X$ to obtain $\mathcal{D}_{H}$-modules on $H$, along the closed immersion $H\hookrightarrow X$. This defines a morphism between the corresponding Tannakian group schemes in the reverse direction.  \par
Recall the following simple Tannakian criterion:
\begin{lem}[Theorem A.1 \cite{esnault2007nori}]
\label{tannaka_exact}
Let \[p:G\rightarrow A\] be a homomorphism of affine group schemes over $k$. It induces a  functor 
\[p^{*}: \mathrm{Rep}(A)\rightarrow \mathrm{Rep}(G)\]where $\mathrm{Rep}$ denotes the category of finite dimensional representations over $k$. Then
\begin{enumerate}
\item The morphism $p:G\rightarrow A$ is faithfully flat if and only if the functor $p^{*}:\mathrm{Rep}(A)\rightarrow\mathrm{Rep}(G)$ is fully faithful and the essential image is closed under taking subquotients in $\mathrm{Rep}(G)$. 
\item The morphism $p: G\rightarrow A$ is a closed immersion if and only if any object in $\mathrm{Rep}(G)$ is a subquotient of some object of the form $p^{*}V$, with $V$ in $\mathrm{Rep}(A)$. 
\end{enumerate}
\end{lem}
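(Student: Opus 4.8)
The plan is to pass from the language of group schemes to that of their coordinate Hopf algebras. Write $A_{G}=\mathcal{O}(G)$ and $A_{A}=\mathcal{O}(A)$; then $p$ is the same datum as a homomorphism of Hopf algebras $\phi=p^{\sharp}\colon A_{A}\to A_{G}$, a finite dimensional representation of $G$ is the same as a finite dimensional $A_{G}$-comodule, and under this dictionary $p^{*}$ is corestriction of comodules along $\phi$. I will freely use two standard facts about affine group schemes over a field: $p$ is faithfully flat if and only if $\phi$ is injective, and $p$ is a closed immersion if and only if $\phi$ is surjective. I also use that every finite dimensional representation embeds, via its coaction, into a finite direct sum of copies of the regular representation $A_{G}$, that $A_{G}$ is the filtered union of its finite dimensional subcomodules, and that $\phi$ defines a surjective (resp.\ injective) $G$-comodule map $p^{*}A_{A}\to A_{G}$ precisely when $p$ is a closed immersion (resp.\ faithfully flat), using $\Delta_{G}\circ\phi=(\phi\otimes\phi)\circ\Delta_{A}$.

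The two ``only if'' directions are direct comodule computations. For (1), assume $\phi$ injective. Full faithfulness follows from the identity $M^{A}=(p^{*}M)^{G}$ for every representation $M$ of $A$: the inclusion $M^{A}\subseteq(p^{*}M)^{G}$ is clear, and the reverse holds because $\mathrm{id}_{M}\otimes\phi$ is injective, so an element with trivial $G$-coaction already has trivial $A$-coaction; taking $M=V^{\vee}\otimes W$ identifies $\mathrm{Hom}_{A}(V,W)$ with $\mathrm{Hom}_{G}(p^{*}V,p^{*}W)$. For closure under subquotients, let $W\subseteq p^{*}V$ be $G$-stable; the obstruction to $W$ being $A$-stable is the composite $W\hookrightarrow V\xrightarrow{\rho_{V}}V\otimes A_{A}\to(V/W)\otimes A_{A}$, and since $\mathrm{id}_{V/W}\otimes\phi$ is injective while the analogous $G$-obstruction vanishes, this composite is zero; closure under quotients then follows by dualizing. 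For the ``only if'' direction of (2), assume $\phi$ surjective. Given $W\in\mathrm{Rep}(G)$, embed $W\hookrightarrow A_{G}^{\oplus n}$; since $\phi$ realizes $A_{G}$ as a quotient $G$-comodule of $p^{*}A_{A}$ and $A_{A}^{\oplus n}$ is the filtered union of finite dimensional $A$-subcomodules, a large enough finite dimensional subrepresentation $V\subseteq A_{A}^{\oplus n}$ has $p^{*}V$ mapping onto $W$, so $W$ is a subquotient of $p^{*}V$.

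For the ``if'' directions I will use the image factorization $G\xrightarrow{q}H\xrightarrow{i}A$, with $q$ faithfully flat and $i$ a closed immersion, together with Tannakian reconstruction: an equivalence of representation categories compatible with the fibre functors comes from an isomorphism of group schemes. For (2), suppose every object of $\mathrm{Rep}(G)$ is a subquotient of some $p^{*}V$. Each $p^{*}V$ factors through $q^{*}\colon\mathrm{Rep}(H)\to\mathrm{Rep}(G)$, whose essential image is closed under subquotients by the proven direction (1) applied to the faithfully flat $q$; hence every object of $\mathrm{Rep}(G)$ lies in that essential image, so $q^{*}$ is essentially surjective, and being also fully faithful it is an equivalence, forcing $q$ to be an isomorphism and $p=i$ a closed immersion. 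For (1), suppose $p^{*}$ is fully faithful with essential image closed under subquotients. By the proven direction (2) applied to $i$, every $U\in\mathrm{Rep}(H)$ is a subquotient of some $i^{*}V$, whence $q^{*}U$ is a subquotient of $p^{*}V$ and therefore, by closure of the image of $p^{*}$, isomorphic to some $p^{*}V'=q^{*}i^{*}V'$; as $q^{*}$ is fully faithful it reflects isomorphisms, so $U\cong i^{*}V'$ and $i^{*}$ is essentially surjective. Moreover $i^{*}$ is faithful, and fullness follows by applying $q^{*}$ and using that $p^{*}=q^{*}i^{*}$ is full while $q^{*}$ is faithful; thus $i^{*}$ is an equivalence, $i$ is an isomorphism, and $p=q$ is faithfully flat.

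The main obstacle is foundational rather than combinatorial: it is the Hopf-algebra dictionary (faithful flatness corresponding to injectivity, closed immersion to surjectivity of $\phi$), which rests on the structure theory of comodules and on flatness over a field and which I treat as a black box. Granting it, the genuinely delicate points are the extraction of an $A$-stable subrepresentation from a merely $G$-stable one in direction (1), which is where injectivity of $\phi$ is used essentially, and the appeal to Tannakian reconstruction in the two ``if'' directions to pass from an equivalence of representation categories back to an isomorphism of the group schemes themselves.
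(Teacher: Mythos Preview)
The paper does not prove this lemma; it is stated with a citation to \cite{esnault2007nori} (Theorem~A.1 there) and used as a black box throughout. There is therefore no proof in the paper to compare against.

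Your argument is correct and is essentially the standard one (compare also Proposition~2.21 in Deligne--Milne, \emph{Tannakian Categories}). The translation to injectivity/surjectivity of the Hopf algebra map, the comodule computation for the ``only if'' directions, and the bootstrap via the image factorisation $G\twoheadrightarrow H\hookrightarrow A$ for the ``if'' directions are all sound. One small simplification: you do not need to invoke Tannakian reconstruction at the end. In part~(2), once $q^{*}$ is an equivalence it is in particular essentially surjective, so every $G$-representation is trivially a subquotient of some $q^{*}U$; by your already-proven ``only if'' direction of~(2) this makes $q$ a closed immersion, and a faithfully flat closed immersion is an isomorphism. The same trick disposes of $i$ in part~(1): essential surjectivity of $i^{*}$ makes its image (namely everything) closed under subquotients, so the proven ``only if'' direction of~(1) forces $i$ faithfully flat, hence an isomorphism. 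This keeps the argument entirely self-contained and removes the appeal to reconstruction that you yourself flagged as a delicate black box.
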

\subsection{Characteristic $0$}
\begin{prop}
\label{lef_alg}
Let $k$ be a field of characteristic $0$. Let $X$ be a geometrically connected, smooth, and projective scheme over $k$,   and $H$ be a smooth hyperplane section of some ample line bundle.
The morphism \[\pi_{1}^{alg}(H)\rightarrow \pi_{1}^{alg}(X),\]
corresponding to the restriction functor from $\mathrm{Conn}(X/k)$ to $\mathrm{Conn}(H/k)$ by Tannakian duality, 
is faithfully flat if the dimension of $X$ is at least $2$, and is an isomorphism if the dimension of $X$ is at least $3$. \par
\end{prop}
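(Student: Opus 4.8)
The plan is to use the Tannakian criterion in Lemma~\ref{tannaka_exact} together with the spreading-out and specialization strategy advertised in the introduction, reducing everything to the case $k=\mathbb{C}$ where the Riemann–Hilbert correspondence lets me invoke the classical topological Lefschetz theorem for $\pi_1^{top}$. Concretely, by part (1) of Lemma~\ref{tannaka_exact}, faithful flatness of $\pi_1^{alg}(H)\to\pi_1^{alg}(X)$ amounts to showing that the restriction functor $\mathrm{Conn}(X/k)^{rs}\to\mathrm{Conn}(H/k)^{rs}$ is fully faithful with essential image stable under subquotients; by part (2), the isomorphism statement (in dimension $\geq 3$) additionally requires that this restriction functor be essentially surjective, i.e. an equivalence of Tannakian categories.

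**First I would** handle the topological model. Over $k=\mathbb{C}$, the Riemann–Hilbert correspondence identifies regular singular integrable connections with finite-dimensional $\mathbb{C}$-representations of $\pi_1^{top}(X^{an},x)$, and likewise on $H$; the restriction functor corresponds to the map on topological fundamental groups induced by $H^{an}\hookrightarrow X^{an}$. By the classical Lefschetz hyperplane theorem for a smooth ample divisor, $\pi_1^{top}(H^{an},x)\to\pi_1^{top}(X^{an},x)$ is surjective when $\dim X\geq 2$ and an isomorphism when $\dim X\geq 3$. Surjectivity of the topological map translates, on representation categories, into full faithfulness of the pullback with subquotient-closed image (hence faithful flatness on the algebraic envelopes $\pi_1^{alg}$), and bijectivity translates into an equivalence (hence an isomorphism of group schemes). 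This settles the proposition for $k=\mathbb{C}$.

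**The main obstacle**, and the genuinely delicate step, is descending from $\mathbb{C}$ to an arbitrary characteristic-$0$ field $k$, precisely because Example~\ref{base_null} shows that $\pi_1^{alg}$ does \emph{not} commute with extension of scalars, so one cannot naively base-change. The plan is a spreading-and-specialization argument: to prove faithful flatness I would take a regular singular connection on $X$ whose restriction to $H$ is trivial (or, for fullness, a morphism of restrictions), spread $X$, $H$, $x$ and the connection data out over a finitely generated $\mathbb{Z}$-subalgebra $R\subset k$ with $\mathrm{Spec}\,R$ smooth, then embed $R$ into $\mathbb{C}$ to reduce the desired vanishing/matching statement to the already-established complex case. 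Because the statements I need (fullness, faithfulness, closure under subquotients, and essential surjectivity) are about \emph{finitely many} objects and morphisms at a time, each can be encoded by finite data that spreads out and specializes, even though the full group schemes do not base-change. I must be careful that regular singularity is preserved under spreading out and specialization — this is where a chosen relative smooth compactification with relative strict normal crossings boundary is invoked, so that the logarithmic extension witnessing regularity spreads out too.

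**Finally I would** assemble the pieces. For faithful flatness ($\dim X\geq 2$): fullness and faithfulness of restriction follow because a horizontal morphism of regular singular connections on $X$ is determined by, and can be checked on, its values over any dense set of fibers, and over $\mathbb{C}$ it is controlled by the surjection of topological groups; closure of the image under subquotients follows since a subobject of a restricted connection corresponds to a subrepresentation of $\pi_1^{top}(H)$ which, by surjectivity, descends to a subrepresentation of $\pi_1^{top}(X)$, and this descent is again a statement about finite data that specializes. For the isomorphism ($\dim X\geq 3$): I additionally need that every regular singular connection on $H$ extends to one on $X$; over $\mathbb{C}$ this is the bijectivity of $\pi_1^{top}$, and by the same spreading-out of the relevant connection together with its log extension, the extended object on $X_{\mathbb{C}}$ descends to $X$ over $k$. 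Combining these with Lemma~\ref{tannaka_exact} yields faithful flatness in dimension $\geq 2$ and an isomorphism in dimension $\geq 3$, as claimed.
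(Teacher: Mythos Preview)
Your overall strategy---reduce to $k=\mathbb{C}$ via Riemann--Hilbert and the topological Lefschetz theorem, then transfer back to general $k$ by spreading and specializing---matches the paper. But the step where you return from $\mathbb{C}$ to $k$ has a genuine gap, most visibly in the closed-immersion part ($\dim X\geq 3$). You assert that after lifting $\mathcal{E}$ on $H$ to some $\tilde{\mathcal{E}}_{\mathbb{C}}$ on $X_{\mathbb{C}}$, ``the extended object on $X_{\mathbb{C}}$ descends to $X$ over $k$.'' This is not justified and in general false: there is no reason the lift should be defined over $k$. The paper's mechanism is more indirect. After first reducing to $k$ finitely generated over $\mathbb{Q}$ and fixing $k\hookrightarrow\mathbb{C}$, one spreads $\tilde{\mathcal{E}}_{\mathbb{C}}$ out to $\tilde{\mathcal{E}}_{R}\in\mathrm{Conn}(X_R/R)$ over a finitely generated $k$-algebra $R$ with $k\hookrightarrow R\hookrightarrow\mathbb{C}$ (note: $R$ is \emph{not} a subring of $k$), arranging that $\tilde{\mathcal{E}}_R|_{H_R}$ is the base change $\mathcal{E}_R$. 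One then specializes along a closed point $R\twoheadrightarrow l$ with $l$ a \emph{finite extension} of $k$, obtaining a lift only on $X_l$. The final move is to push forward along the finite map $X_l\to X$: then $\mathcal{E}$ is a subobject of the restriction to $H$ of this pushforward, and since the essential image of $\mathrm{res}_k$ is already closed under subquotients (from the faithful-flatness half), $\mathcal{E}$ itself lies in the essential image. This specialize-to-finite-extension-then-pushforward step is the missing idea in your sketch.

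The faithful-flatness half of your plan has a milder version of the same lacuna (how does closure under subobjects over $\mathbb{C}$ give closure over $k$?). Here the paper avoids a direct spreading argument altogether and instead invokes a result of Katz: the Tannakian group of the full subcategory $\langle\mathrm{Conn}(X/k)\rangle\subset\mathrm{Conn}(X_{\mathbb{C}}/\mathbb{C})$ generated by base-changes from $k$ is exactly $\pi_1^{alg}(X)\times_k\mathbb{C}$, and likewise for $H$. The faithfully flat $\pi_1^{alg}(H_{\mathbb{C}})\to\pi_1^{alg}(X_{\mathbb{C}})$ restricts to a faithfully flat map between these subgroups, which is the base change to $\mathbb{C}$ of the map over $k$; faithful flatness then descends along the field extension $k\hookrightarrow\mathbb{C}$.
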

\begin{proof}
For simplicity, we omit  $\nabla$ from the notation $(\mathcal{E},\nabla)$ of a connection. \par
Since we have the Lefschetz theorem for topological fundamental groups, the proposition holds for $k=\mathbb{C}$ by passing to algebraic envelopes. \par 
We first prove the faithfully flat part so let $X$ be of dimension $\geq 2$. We write $\mathrm{res}_{k}: \mathrm{Conn}(X/k)\rightarrow \mathrm{Conn}(H/k)$ the restriction functor. By Lemma \ref{tannaka_exact}, we have to prove that the functor  $\mathrm{res}_{k}$ is  fully faithful and the essential image is closed under taking subobjects.  Now suppose we have a connection $\mathcal{E}$ and an injection $\mathcal{F}\hookrightarrow \mathcal{E}|_{H}$. We can find a subfield $k'$ of $k$, which is finitely generated over $\mathbb{Q}$, such that $x$ descends to a $k'$-point, $H$ (resp. $X$) descends to $H'$ (resp. $X'$) over $k'$, $\mathcal{E}$ (resp. $\mathcal{F}$) descends to a connection $\mathcal{E}'$ (resp. $\mathcal{F}'$) over $X'$ (resp. $H'$), and the injection $\mathcal{F}\hookrightarrow \mathcal{E}|_{H}$ descends to $\mathcal{F}'\hookrightarrow \mathcal{E}'|_{H'}$. Now suppose we have a faithfully flat $\pi_{1}^{alg}(H')\rightarrow \pi_{1}^{alg}(X')$, by Lemma \ref{tannaka_exact} we can find a connection $\tilde{\mathcal{F}'}$ on $X'$ whose restriction to $H'$ is $\mathcal{F}'$. Consider the base change of $\tilde{\mathcal{F}'}$ to $X$ over $k$, so its restriction to $H$ is $\mathcal{F}$.  The same descent trick applies to prove $\mathrm{res}_{k}$ to be fully faithful. We reduce to the case that $k$ is finitely generated over $\mathbb{Q}$. \par
Now fix a field extension $k\hookrightarrow \mathbb{C}$, and we have the following diagram of categories:
\[\xymatrix{
&\mathrm{Conn}(X/k) \ar[r]\ar[d]^{\mathrm{res}_{k}} &\langle\mathrm{Conn}(X/k)\rangle \ar[r]\ar[d]^{\mathrm{res}_{\mathbb{C}}} &\mathrm{Conn}(X_{\mathbb{C}}/\mathbb{C}) \ar[d]^{\mathrm{res}_{\mathbb{C}}}\\
&\mathrm{Conn}(H/k) \ar[r]&\langle\mathrm{Conn}(H/k)\rangle \ar[r] &\mathrm{Conn}(H_{\mathbb{C}}/\mathbb{C}) \\
}
\]
We use $\mathrm{Aut}(\langle\mathrm{Conn}(X/k)\rangle)$ (resp. $\mathrm{Aut}(\langle\mathrm{Conn}(H/k)\rangle))$ to denote the Tannakian group scheme of $\langle\mathrm{Conn}(X/k)\rangle$ (resp. $\langle\mathrm{Conn}(H/k)\rangle$), which is the full Tannakian subcategory of $\mathrm{Conn}(X_{\mathbb{C}}/\mathbb{C})$ (resp.$\mathrm{Conn}(H_{\mathbb{C}}/\mathbb{C})$) generated by objects obtained as extension of scalars from $\mathrm{Conn}(X/k)$ (resp. $\mathrm{Conn}(H/k)$). The morphism  $\pi_{1}^{alg}(H_{\mathbb{C}})\rightarrow \pi_{1}^{alg}(X_{\mathbb{C}})$ is faithfully flat, and it restricts to a faithfully flat morphism $\mathrm{Aut}(\langle\mathrm{Conn}(H/k)\rangle)\rightarrow \mathrm{Aut}(\langle\mathrm{Conn}(X/k)\rangle)$, which is the base change of the morpshim $\pi_{1}^{alg}(H)\rightarrow \pi_{1}^{alg}(X)$ over $k$ to $\mathbb{C}$ (1.3.2 \cite{katz1987calculation}). Therefore the morphism $\pi_{1}^{alg}(H)\rightarrow \pi_{1}^{alg}(X)$ is faithfully flat as well.  The proof for the faithful flatness is finished. \par
Now we prove $\pi_{1}^{alg}(H)\rightarrow \pi_{1}^{alg}(X)$ to be a closed immersion, if  $X$ is of dimension $\geq 3$. By Lemma \ref{tannaka_exact}, it suffices to prove that, for any connection $\mathcal{E}$ on $H$, there exists $\tilde{\mathcal{E}}$ on $X$ with restriction to $H$ being the given $\mathcal{E}$.  Proceed as above, we reduce to the case that $k$ is finitely generated over $\mathbb{Q}$ and fix an extension $k\hookrightarrow \mathbb{C}$.  \par
The base change $\mathcal{E}_{\mathbb{C}}$ of $\mathcal{E}$ over $H_{\mathbb{C}}$ lifts to a connection $\tilde{\mathcal{E}}_{\mathbb{C}}$ over $X_{\mathbb{C}}$.  We can find a spreading in the following sense: there exists $k\hookrightarrow R\hookrightarrow \mathbb{C}$ with $R$ being a finitely generated $k$-algebra, together with a connection $\tilde{\mathcal{E}}_{R}\in \mathrm{Conn}(X_{R}/R)$ whose base change along $R\hookrightarrow \mathbb{C}$ is exactly $\tilde{\mathcal{E}}_{\mathbb{C}}$. Now consider the restriction  $\tilde{\mathcal{E}}_{R}|_{H_{R}}\in \mathrm{Conn}(H_{R}/R)$. Since the base change of this restriction along $R\rightarrow \mathbb{C}$ is $\mathcal{E}_{\mathbb{C}}$, enlarging $R$ if necessary we may assume the base change $\mathcal{E}_{R}$ of $\mathcal{E}$ along $k\hookrightarrow R$ is exactly the restriction  $\tilde{\mathcal{E}}_{R}|_{H_{R}}$ .  Now we pick up $R\twoheadrightarrow l\hookrightarrow \bar{k}$ with $l$ being a finite extension of $k$ and consider the base change of $\tilde{\mathcal{E}}_{R}$ along $R\twoheadrightarrow l$. We see that $\mathcal{E}_{l}$ is the restriction to $H_{l}$ of a connection on $X_{l}$. We pushforward this connection along $X_{l}\rightarrow X$, and then restrict it to $H$. We see that $\mathcal{E}$ is a subobject of the restriction to $H$ of some connection on $X$, which shows that $\mathcal{E}$ is itself in the essential image of $\mathrm{res}_{k}$, since the essential image of $\mathrm{res}_{k}$ is closed under taking subquotients. The proof for the closed immersion is finished.  
\end{proof}
\subsection{Characteristic $p$}
\begin{prop}
\label{lef_str}
Let $X$ be a geometrically connected, smooth and projective scheme over a perfect field $k$ of characteristic $p>0$,  and $H$ be a smooth hyperplane section of some ample line bundle.
The morphism \begin{equation}\label{lef_stra}\pi_{1}^{diff}(H)\rightarrow \pi_{1}^{diff}(X)\end{equation}
is faithfully flat if the dimension of $X$ is at least $2$, and is an isomorphism if the dimension of $X$ is at least $3$. \par
\end{prop}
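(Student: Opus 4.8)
The plan is to apply the Tannakian criterion of Lemma \ref{tannaka_exact}, after translating everything into coherent $\mathcal{D}_{X}$-modules via Theorem \ref{df} and then into coherent infinitesimal crystals via Proposition \ref{crystal}. Write $\hat{X}$ for the formal completion of $X$ along $H$. The geometric fact that drives the whole argument is that restriction identifies the category of coherent infinitesimal crystals on $\hat{X}$ with those on $H$: every nilpotent thickening of an open of $H$ already occurs in the infinitesimal site of $H$ and factors through $\hat{X}$, so a crystal on $H$ extends canonically and uniquely over $\hat{X}$, and conversely. Thus the restriction functor factors as
\[
\mathrm{Crys}(X) \longrightarrow \mathrm{Crys}(\hat{X}) \xrightarrow{\ \sim\ } \mathrm{Crys}(H),
\]
with the second arrow an equivalence, and the problem reduces to comparing coherent crystals on $X$ with coherent crystals on its formal completion $\hat{X}$. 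This is exactly the setting of Grothendieck--Lefschetz theory, and this reduction is the step taken from \cite{esnault2016simply}.

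First I would prove full faithfulness of $\mathrm{res}$, which is needed in both regimes. A morphism of coherent crystals is a horizontal morphism of the underlying coherent sheaves, hence is computed by the global sections of an internal Hom crystal, which in turn reduces to comparing $H^{0}$ of coherent sheaves on $X$ and on $\hat{X}$. Because the underlying sheaves are automatically vector bundles and $X$ is smooth, the relevant depth and positivity hypotheses hold, and the theorem on formal functions (Grothendieck existence) makes the comparison on $H^{0}$ bijective once $X$ is projective, $H$ is cut out by an ample line bundle, and $\dim X \geq 2$. This yields full faithfulness of $\mathrm{res}$ for $\dim X \geq 2$.

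For the faithfully flat statement ($\dim X \geq 2$) it then remains, by Lemma \ref{tannaka_exact}(1), to show the essential image is closed under subobjects. Given $\mathcal{E}\in\mathrm{Crys}(X)$ and a subcrystal $\mathcal{F}_{H}\hookrightarrow \mathcal{E}|_{H}$, the formal equivalence produces a coherent formal subcrystal $\hat{\mathcal{F}}\hookrightarrow \mathcal{E}|_{\hat{X}}$; a coherent formal subsheaf of the completion of an already-algebraic coherent sheaf algebraizes precisely in the Lefschetz range $\dim X \geq 2$, and the crystal structure is inherited since it is determined horizontally. Hence $\mathcal{F}_{H}$ lies in the essential image, and the morphism is faithfully flat. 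For the isomorphism statement ($\dim X \geq 3$) I would, by Lemma \ref{tannaka_exact}(2), prove essential surjectivity of $\mathrm{res}$: given an arbitrary $\mathcal{E}_{H}\in \mathrm{Crys}(H)$, extend it to a coherent formal crystal on $\hat{X}$ and then algebraize it to a genuine crystal on $X$. This effectivity step is where the algebraization theorem \cite[Tag 0EL7]{stacks-project} enters, and it requires the stronger hypothesis $\dim X \geq 3$. Together with faithful flatness this gives the isomorphism.

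The main obstacle is precisely this final algebraization. One must verify that the hypotheses of the effectivity theorem are met for the pair $(X,H)$ and, more delicately, that the algebraization respects the crystal structure (equivalently, the $\mathcal{D}_{X}$-module structure) and not merely the underlying coherent sheaf. Since the infinitesimal descent data already lives over the formal neighborhood, one expects the structure to be carried along once the underlying vector bundle algebraizes, by applying Grothendieck existence to the relevant sheaves of homomorphisms; but making this compatibility precise, and tracking the exact depth and positivity conditions that separate the $\dim X \geq 2$ and $\dim X \geq 3$ thresholds, is the technical heart of the proof.
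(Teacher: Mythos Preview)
Your high-level strategy matches the paper's: translate to crystals via Theorem~\ref{df} and Proposition~\ref{crystal}, invoke \cite{esnault2016simply} for faithful flatness, and for $\dim X\geq 3$ algebraize a formal extension using \cite[Tag 0EL7]{stacks-project}. The obstacle you single out at the end---getting the algebraization to respect the crystal/$\mathcal{D}$-module structure, and the fact that Tag 0EL7 only yields a bundle on an \emph{open} $V\supset H$, not on all of $X$---is exactly the point where the paper's execution differs from yours.

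Rather than trying to algebraize the crystal structure directly, the paper switches back to the $F$-divided description for the construction. Given $(\mathcal{E}_n)_{n\geq 0}\in\mathrm{Fdiv}(H)$, the crystal interpretation is used only to produce, for each fixed $n$, the compatible system $\mathcal{E}_n(m)$ on the infinitesimal neighborhoods $H(m)$. Then Tag 0EL7 is applied separately for each $n$ to obtain a locally free $\tilde{\mathcal{E}}_n$ on some open $V_n\supset H$, and one pushes forward along $j_{V_n}:V_n\hookrightarrow X$ (the result is coherent and reflexive since $\mathrm{codim}(X\setminus V_n)\geq 2$). The independence of $(V_n,\tilde{\mathcal{E}}_n)$ follows from the uniqueness clause of Tag 0EL7 after shrinking, together with \cite[Tag 0E9I]{stacks-project}. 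Finally the $F$-divided structure $F^*\tilde{\mathcal{E}}_{n+1}\cong\tilde{\mathcal{E}}_n$ is verified after shrinking to a common open, and passes to the pushforwards by flat base change along the absolute Frobenius. This neatly sidesteps the problem you flag: the only compatibility one needs is a single isomorphism $F^*\mathcal{E}_{n+1}\cong\mathcal{E}_n$ of coherent sheaves, not an entire $\mathcal{D}$-module structure, and the extension from $V_n$ to $X$ is handled by pushforward rather than by a further algebraization step. Your proposed route (algebraize the whole crystal and then argue the descent data comes along) is plausible but would require extra work precisely at the point you identify; the paper's detour through $\mathrm{Fdiv}$ avoids it entirely.
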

\begin{proof}
The part concerning faithful flatness was proved as Theorem 3.5 in \cite{esnault2016simply}. We make a complement on how to prove that the morphism \ref{lef_stra} is a closed immersion for $X$ of dimension $\geq 3$. \par
By Lemma \ref{tannaka_exact} it suffices to construct a $F$-divided sheaf on $X$ whose restriction to $H$ is any given $F$-divided  sheaf $(\mathcal{E}_{n})_{n\geq 0}$ on $H$. \par
We use $H(m)$ to denote the $m$-th infinitesimal neighborhood of $H$ in $X$. In particular $H_{0}$ is $H$ itself.   Now we  fix a non-negative integer $n$. We interprete  $F$-divided sheaves  as  crystals, using Theorem \ref{df} and Proposition \ref{crystal}, and there is a locally free sheaf $\mathcal{E}_{n}(m)$ on $H(m)$ for each $m$, such that $\mathcal{E}_{n}(m)$ restricts to  $\mathcal{E}_{n}(m')$ along $H(m)\hookrightarrow H(m')$ for any $m'\leq m$, and $\mathcal{E}_{n}(0)=\mathcal{E}_{n}$.  By \cite[\href{https://stacks.math.columbia.edu/tag/0EL7}{Tag 0EL7}]{stacks-project}, there is an open subset $V_{n}$ of $X$ containing $H$, and a locally free sheaf $\tilde{\mathcal{E}}_{n}$ on $V_{n}$ restricting to $\mathcal{E}_{n}(m)$ along $H(m)\hookrightarrow V_{n}$ for any $m$ (and this is where we use the assumption that $X$ has dimension $\geq 3$).  Now we consider the pushforward $j_{V_{n}*}\tilde{\mathcal{E}}_{n}$ on $X$ where $j_{V_{n}}: V_{n}\rightarrow X$ is the open immersion. The pushforward is coherent and reflexive since $V_{n}$ contains the ample divisor $H$ and the complement of $V_{n}$ in $X$ is of codimension at least $2$ .\par
 We claim that the sheaf $j_{V_{n}*}\tilde{\mathcal{E}}_{n}$ on $X$ does not depend on the choice of $V_{n}$ and $\tilde{\mathcal{E}}_{n}$. To prove this, we only have to notice that any two such pairs $(V_{n},\tilde{\mathcal{E}}_{n})$ $(V_{n}',\tilde{\mathcal{E}}_{n}')$ are isomorphic after being restricted to a small enough open subset $V\subset V_{n}\cap V_{n}'$ containing $H$, still by \cite[\href{https://stacks.math.columbia.edu/tag/0EL7}{Tag 0EL7}]{stacks-project}. Now it suffices to evoke \cite[\href{https://stacks.math.columbia.edu/tag/0E9I}{Tag 0E9I}]{stacks-project}. \par
Now we prove that $(j_{V_{n}*}\tilde{\mathcal{E}}_{n})_{n\geq 0}$ is  $F$-divided, which is the lifting of $(\mathcal{E}_{n})_{n\geq 0}$ as desired. Fix a non-negative integer $n$,  we could shrink the open subschemes $V_{n}$ and $V_{n+1}$ so that we have $V_{n}=V_{n+1}$ and $\tilde{\mathcal{E}}_{n}\cong F^{*}\tilde{\mathcal{E}}_{n+1}$. Then $j_{V_{n}*}\tilde{\mathcal{E}}_{n} \cong F^{*}j_{V_{n+1}*}\tilde{\mathcal{E}}_{n+1}$ follows from flat base change (saying that the pullback along the absolute Frobenius morphism commutes with pushforward along the open immersion, as $F$ is flat). 
\end{proof}

\section{On the  Künneth formula}
In this section we work over a field $k$ of characteristic $0$. Still, the connection map $\nabla$ and base points of schemes are omitted from notations, unless they are being explicitly used. 
\subsection{Some general dicussion}
Let $X$ and $Y$ be geometrically connected and smooth over $k$.  Let $x\in X$ (resp. $y\in Y$) be the chosen base points, and $i_{x}$ (resp $i_{y}$) be the inclusion $i_{x}:Y=\{x\}\times Y\hookrightarrow X\times Y$ (resp. $i_{y}:X=X\times\{y\}\hookrightarrow X\times Y$). Let $p_{X}$ (resp. $p_{Y}$) be the projection $X\times Y\rightarrow X$ (resp. $X\times Y\rightarrow Y$). Let $\pi_{1}^{*}$ be either $\pi_{1}^{diff}$ or $\pi_{1}^{alg}$. Let $\mathrm{Conn}^{*}(X/k)\boxtimes\mathrm{Conn}^{*}(Y/k)$ be $\langle\mathcal{F}\boxtimes\mathcal{G},\mathcal{F}\in\mathrm{Conn}^{*}(X/k),\mathcal{G}\in\mathrm{Conn}^{*}(Y/k)\rangle$, where $\mathrm{Conn}^{*}$ is the category of all integrable connections, or its full subcategory of regular singular connections, and $\mathcal{F}\boxtimes\mathcal{G}$ is defined to be $p_{X}^{*}\mathcal{F}\otimes p_{Y}^{*}\mathcal{G}$.  We have the following factorization of Tannakian group schemes: 
\begin{equation}\label{fac}\pi_{1}^{*}(X\times Y) \twoheadrightarrow \mathrm{Aut}(\mathrm{Conn}^{*}(X/k)\boxtimes\mathrm{Conn}^{*}(Y/k)) \xrightarrow{\sim} \pi_{1}^{*}(X)\times \pi_{1}^{*}(Y), \end{equation}where some explanations are in order. The term $\mathrm{Aut}(\mathrm{Conn}^{*}(X/k)\boxtimes\mathrm{Conn}^{*}(Y/k))$ is the  Tannakian group scheme associated to  $\mathrm{Conn}^{*}(X/k)\boxtimes\mathrm{Conn}^{*}(Y/k)$. The first arrow in the factorization \ref{fac} is faithfully flat by construction and Lemma \ref{tannaka_exact}. The second arrow is an isomorphism which can be proved as follows. For the faithful flatness, it suffices to observe that the  pul-back along $p_{X}$ (resp. $p_{Y}$) sends an object in $\mathrm{Conn}^{*}(X/k)$ (resp. $\mathrm{Conn}^{*}(Y/k)$) to an object in $\mathrm{Conn}^{*}(X/k)\boxtimes\mathrm{Conn}^{*}(Y/k)$ and it suffices to observe the morphisms between Tannakian group schemes induced by $p_{X}i_{x}$ and $p_{Y}i_{y}$ are trivial and the morphisms between Tannakian group schemes induced by $p_{Y}i_{x}$ and $p_{X}i_{y}$ are the identities. By construction, objects of the form $\mathcal{F}\boxtimes\mathcal{G}$ generate $\mathrm{Conn}^{*}(X/k)\boxtimes\mathrm{Conn}^{*}(Y/k)$, so the second morphism is a closed immersion. \par
We have the following lemma characterizing objects in $\mathrm{Conn}^{*}(X/k)\boxtimes\mathrm{Conn}^{*}(Y/k)$. 
\begin{lem}
\label{cir}
Any object in $\mathrm{Conn}^{*}(X/k)\boxtimes\mathrm{Conn}^{*}(Y/k)$ can be written as $\mathcal{E}_{1}/\mathcal{E}_{2}$, with $\mathcal{E}_{2}\hookrightarrow \mathcal{E}_{1}\hookrightarrow \mathcal{F}\boxtimes\mathcal{G}$ for some $\mathcal{F}$ and $\mathcal{G}$. \par
In particular, combining the factorization \ref{fac}, the morphism 
\[\pi^{*}(X\times Y)\rightarrow \pi_{1}^{*}(X)\times\pi_{1}^{*}(Y)\] is an isomorphism if and only if any object in $\mathrm{Conn}^{*}(X\times Y/k)$ is of the form $\mathcal{E}_{1}/\mathcal{E}_{2}$ for some $\mathcal{E}_{2}\hookrightarrow \mathcal{E}_{1}\hookrightarrow \mathcal{F}\boxtimes\mathcal{G}$, with $\mathcal{F}\in\mathrm{Conn}^{*}(X/k)$ and  $\mathcal{G}\in\mathrm{Conn}^{*}(Y/k)$. 
\end{lem}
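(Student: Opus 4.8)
The plan is to establish the structural description of $\mathrm{Conn}^{*}(X/k)\boxtimes\mathrm{Conn}^{*}(Y/k)$ first, and then to read off the ``in particular'' from the factorization \ref{fac} using the Tannakian criterion of Lemma \ref{tannaka_exact}. For the structural description I would introduce the class $\mathcal{S}$ of all objects of $\mathrm{Conn}^{*}(X\times Y/k)$ of the form $\mathcal{E}_{1}/\mathcal{E}_{2}$ with $\mathcal{E}_{2}\hookrightarrow\mathcal{E}_{1}\hookrightarrow\mathcal{F}\boxtimes\mathcal{G}$ for a \emph{single} pair $(\mathcal{F},\mathcal{G})$, $\mathcal{F}\in\mathrm{Conn}^{*}(X/k)$ and $\mathcal{G}\in\mathrm{Conn}^{*}(Y/k)$; that is, the subquotients of one external product. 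The aim is to prove that $\mathcal{S}$ is a full Tannakian subcategory. Since $\mathcal{S}$ contains every generator $\mathcal{F}\boxtimes\mathcal{G}$ and is manifestly contained in the subcategory they generate, this would force $\mathcal{S}=\mathrm{Conn}^{*}(X/k)\boxtimes\mathrm{Conn}^{*}(Y/k)$, which is exactly the first assertion.

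To see that $\mathcal{S}$ is a Tannakian subcategory I would use three compatibilities of $\boxtimes$ with the tensor operations, all immediate from $\mathcal{F}\boxtimes\mathcal{G}=p_{X}^{*}\mathcal{F}\otimes p_{Y}^{*}\mathcal{G}$ and the monoidality of pullback: the identity $(\mathcal{F}_{1}\boxtimes\mathcal{G}_{1})\otimes(\mathcal{F}_{2}\boxtimes\mathcal{G}_{2})\cong(\mathcal{F}_{1}\otimes\mathcal{F}_{2})\boxtimes(\mathcal{G}_{1}\otimes\mathcal{G}_{2})$; the identity $(\mathcal{F}\boxtimes\mathcal{G})^{\vee}\cong\mathcal{F}^{\vee}\boxtimes\mathcal{G}^{\vee}$; and, crucially, the observation that $(\mathcal{F}_{1}\boxtimes\mathcal{G}_{1})\oplus(\mathcal{F}_{2}\boxtimes\mathcal{G}_{2})$ is a direct summand of the single product $(\mathcal{F}_{1}\oplus\mathcal{F}_{2})\boxtimes(\mathcal{G}_{1}\oplus\mathcal{G}_{2})$. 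Because each factor category $\mathrm{Conn}^{*}$ is closed under $\otimes$, $\oplus$ and $(-)^{\vee}$ (for the regular singular categories this is the remark following the definition of regular singularity), the pairs stay inside the allowed categories throughout. Since $\otimes$ and $(-)^{\vee}$ are exact on a rigid tensor category over a field, being a subquotient of one external product is inherited under $\otimes$, $\oplus$ and $(-)^{\vee}$; and a subquotient of an object of $\mathcal{S}$ is again a subquotient of one external product. Together with the presence of the unit $\mathcal{O}_{X}\boxtimes\mathcal{O}_{Y}=\mathcal{O}_{X\times Y}$, this shows $\mathcal{S}$ is closed under every Tannakian operation.

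For the consequence, the composite in the factorization \ref{fac} is exactly the morphism $\pi_{1}^{*}(X\times Y)\to\pi_{1}^{*}(X)\times\pi_{1}^{*}(Y)$, and its second arrow is an isomorphism; hence the composite is an isomorphism if and only if the first arrow $\pi_{1}^{*}(X\times Y)\twoheadrightarrow\mathrm{Aut}(\mathrm{Conn}^{*}(X/k)\boxtimes\mathrm{Conn}^{*}(Y/k))$ is. This first arrow is already faithfully flat, so it is an isomorphism precisely when it is also a closed immersion, which by Lemma \ref{tannaka_exact}(2) happens if and only if every object of $\mathrm{Conn}^{*}(X\times Y/k)$ is a subquotient of an object of the subcategory $\mathrm{Conn}^{*}(X/k)\boxtimes\mathrm{Conn}^{*}(Y/k)$. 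By the first part these subcategory objects are themselves subquotients of single external products, and a subquotient of such is again a subquotient of a single $\mathcal{F}\boxtimes\mathcal{G}$; this is the stated criterion.

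I do not anticipate a genuine obstacle here: the whole argument is bookkeeping inside the Tannakian formalism. The one point that deserves care is the direct-sum step, which is what upgrades ``subquotient of a finite direct sum of external products'' to ``subquotient of a single $\mathcal{F}\boxtimes\mathcal{G}$'' and without which the conclusion would be strictly weaker, together with the exactness of $\otimes$ and $(-)^{\vee}$ needed to transport the subquotient property; both are automatic over a field.
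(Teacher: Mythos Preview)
Your proof is correct and follows essentially the same approach as the paper: define the class of subquotients of a single $\mathcal{F}\boxtimes\mathcal{G}$, verify it is closed under the Tannakian operations, and conclude it coincides with $\mathrm{Conn}^{*}(X/k)\boxtimes\mathrm{Conn}^{*}(Y/k)$; then deduce the ``in particular'' from the factorization and Lemma~\ref{tannaka_exact}. Your explicit treatment of direct sums via the embedding $(\mathcal{F}_{1}\boxtimes\mathcal{G}_{1})\oplus(\mathcal{F}_{2}\boxtimes\mathcal{G}_{2})\hookrightarrow(\mathcal{F}_{1}\oplus\mathcal{F}_{2})\boxtimes(\mathcal{G}_{1}\oplus\mathcal{G}_{2})$ is a welcome clarification that the paper leaves implicit, since closure under subquotients, tensor, and duals alone does not formally force closure under direct sums.
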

\begin{proof}
We prove that the full subcategory of $\mathrm{Conn}^{*}(X\times Y/k)$ consisting of objects of the form $\mathcal{E}_{1}/\mathcal{E}_{2}$ is closed under taking subquotients, tensor products and duals. Therefore it must coincide with $\mathrm{Conn}^{*}(X)\boxtimes\mathrm{Conn}^{*}(Y)$. \par
\begin{enumerate}

\item It is clear that objects of the form $\mathcal{E}_{1}/\mathcal{E}_{2}$ form a full subcategory of $\mathrm{Conn}^{*}(X\times Y/k)$, which is closed under taking subobjects and quotients, and therefore subquotients.
\item Consider  $\mathcal{E}_{2}\hookrightarrow \mathcal{E}_{1}\hookrightarrow \mathcal{F}\boxtimes\mathcal{G}$ (resp.  $\mathcal{E}_{2}'\hookrightarrow \mathcal{E}_{1}'\hookrightarrow \mathcal{F}'\boxtimes\mathcal{G}'$).  We have $\mathcal{E}_{2}\otimes\mathcal{E}_{2}'\hookrightarrow \mathcal{E}_{1}\otimes\mathcal{E}_{1}'\hookrightarrow (\mathcal{F}\otimes\mathcal{F}')\boxtimes(\mathcal{G}\otimes\mathcal{G}')$, and a quotient 
$\mathcal{E}_{1}\otimes\mathcal{E}_{1}'/\mathcal{E}_{2}\otimes\mathcal{E}_{2}' \twoheadrightarrow (\mathcal{E}_{1}/\mathcal{E}_{2})\otimes(\mathcal{E}_{1}'/\mathcal{E}_{2}')$. Therefore $ (\mathcal{E}_{1}/\mathcal{E}_{2})\otimes(\mathcal{E}_{1}'/\mathcal{E}_{2}')$ is of the desired form since objects of the desired form are closed under taking subquotients. 
\item Consider  $\mathcal{E}_{2}\hookrightarrow \mathcal{E}_{1}\hookrightarrow \mathcal{F}\boxtimes\mathcal{G}$. The dual of $\mathcal{E}_{1}/\mathcal{E}_{2}$ is a subobject of the dual $\mathcal{E}_{1}^{\vee}$ of $\mathcal{E}_{1}$, so it suffiecs to prove that $\mathcal{E}_{1}^{\vee}$ is of the desired form, which is clear since we have a quotient map $\mathcal{F}^{\vee}\boxtimes\mathcal{G}^{\vee}\twoheadrightarrow \mathcal{E}_{1}^{\vee}$. 
\end{enumerate}
\end{proof}
\subsection{Künneth formula for $\pi_{1}^{alg}$}
\begin{prop}
\label{kun_alg}
Let $X$ and $Y$ be geometrically connected and smooth schemes over $k$.  We have a Künneth formula 
\begin{equation}\label{kun_struc_alg}\pi_{1}^{alg}(X\times_{k}Y)\xrightarrow{\sim} \pi_{1}^{alg}(X)\times_{k}\pi_{1}^{alg}(Y).\end{equation}
\end{prop}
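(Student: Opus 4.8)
The plan is to use Lemma \ref{cir} to translate the statement into a property of individual connections, to establish that property over $\mathbb{C}$ by the Riemann--Hilbert correspondence, and then to transport it back to $k$ by the spreading-and-specialization argument already used in Proposition \ref{lef_alg}. By the factorization \ref{fac} together with Lemma \ref{cir}, the morphism \ref{kun_struc_alg} is an isomorphism if and only if every object $\mathcal{M}\in\mathrm{Conn}^{alg}(X\times Y/k)$ is a subquotient of a box product $\mathcal{F}\boxtimes\mathcal{G}$ with $\mathcal{F}\in\mathrm{Conn}^{alg}(X/k)$ and $\mathcal{G}\in\mathrm{Conn}^{alg}(Y/k)$, where $\mathrm{Conn}^{alg}$ denotes the category of regular singular connections; this is the assertion I would prove. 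Because $\pi_{1}^{alg}$ is not compatible with extension of scalars (Example \ref{base_null}), the reduction to $\mathbb{C}$ cannot be performed at the level of group schemes and must be carried out object by object.

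First I would settle the case $k=\mathbb{C}$. By Deligne's Riemann--Hilbert correspondence, $\mathrm{Conn}^{alg}$ of a smooth connected complex variety is equivalent to the category of finite-dimensional complex representations of its topological fundamental group, and the latter satisfies the topological Künneth formula $\pi_{1}^{top}((X\times Y)^{an},x\times y)\cong\pi_{1}^{top}(X^{an},x)\times\pi_{1}^{top}(Y^{an},y)$. It therefore suffices to show that, for any two affine group schemes $G_{1},G_{2}$, every finite-dimensional representation of $G_{1}\times G_{2}$ is a subquotient of a finite direct sum of external tensor products $V_{1}\boxtimes V_{2}$; equivalently, that the algebraic envelope of a product is the product of the algebraic envelopes. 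This is a general Tannakian fact: any finite-dimensional representation $W$ of $G=G_{1}\times G_{2}$ embeds, via its coaction, into a finite sum of copies of the regular representation $\mathcal{O}(G)=\mathcal{O}(G_{1})\otimes\mathcal{O}(G_{2})$, and since each $\mathcal{O}(G_{i})$ is the union of its finite-dimensional subrepresentations, the finite-dimensional $W$ already lands in a finite sum $\bigoplus_{i}V_{1}^{(i)}\boxtimes V_{2}^{(i)}$. Applying this with $G_{1}=\pi_{1}^{alg}(X_{\mathbb{C}})$ and $G_{2}=\pi_{1}^{alg}(Y_{\mathbb{C}})$ yields the desired property over $\mathbb{C}$.

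Next I would descend. As in Proposition \ref{lef_alg}, I would spread the data $(X,Y,\mathcal{M},x\times y)$ over a subfield of $k$ finitely generated over $\mathbb{Q}$, reducing to the case where $k$ itself is finitely generated over $\mathbb{Q}$, and fix an embedding $k\hookrightarrow\mathbb{C}$. The base change $\mathcal{M}_{\mathbb{C}}$ is regular singular, so by the complex case it is a subquotient of $\mathcal{F}^{\mathbb{C}}\boxtimes\mathcal{G}^{\mathbb{C}}$ for suitable regular singular $\mathcal{F}^{\mathbb{C}}$ on $X_{\mathbb{C}}$ and $\mathcal{G}^{\mathbb{C}}$ on $Y_{\mathbb{C}}$. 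All of this data---the two connections, the sub/quotient presentation, and the logarithmic models witnessing regular-singularity---is of finite presentation, hence spreads out over a finitely generated $k$-subalgebra $R\subset\mathbb{C}$. Specializing at a closed point $R\twoheadrightarrow l$ with $l/k$ finite produces regular singular $\mathcal{F}^{l},\mathcal{G}^{l}$ over $l$ together with a presentation of $\mathcal{M}_{l}$ as a subquotient of $\mathcal{F}^{l}\boxtimes\mathcal{G}^{l}$. Finally, pushing forward along the finite étale morphism $f\colon(X\times Y)_{l}\to X\times Y$, and using that $\mathcal{M}$ is a direct summand of $f_{*}f^{*}\mathcal{M}$, the projection formula, and the facts that finite pushforward preserves regular-singularity and carries the relevant sheaves into $\mathrm{Conn}^{alg}(X/k)\boxtimes\mathrm{Conn}^{alg}(Y/k)$, I would conclude that $\mathcal{M}$ is a subquotient of a box product of regular singular connections over $k$, as required by Lemma \ref{cir}.

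The main obstacle is the descent step, and specifically the requirement that regular-singularity survive spreading and specialization: one must spread out not merely the connections $\mathcal{F}^{\mathbb{C}},\mathcal{G}^{\mathbb{C}}$ but an entire relative logarithmic model over a smooth compactification with relative strict normal crossings boundary, and then check that specializing it at $\mathrm{Spec}\,l$ still yields a logarithmic extension, so that $\mathcal{F}^{l}$ and $\mathcal{G}^{l}$ remain regular singular over $l$. This is the same delicate point that appears in the closed-immersion step of Proposition \ref{lef_alg}, and it is exactly what forces the object-by-object argument in place of a direct base change of the group schemes.
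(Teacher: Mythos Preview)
Your proposal is correct and follows essentially the same route as the paper's proof: reduce via Lemma~\ref{cir} and the factorization~\ref{fac} to showing that every regular singular connection on $X\times Y$ is a subquotient of a box product, verify this over $\mathbb{C}$ via Riemann--Hilbert and the topological K\"unneth formula, then descend by spreading out (including the logarithmic models witnessing regular singularity) over a finitely generated $k$-algebra $R\subset\mathbb{C}$ and specializing to a finite extension $l/k$. The only point where your wording is looser than the paper's is the final pushforward step: what you package as ``the projection formula and the fact that pushforward carries the sheaves into the box category'' is made explicit in the paper as the natural surjection $q_{X,*}\mathcal{F}_{l}\boxtimes q_{Y,*}\mathcal{G}_{l}\twoheadrightarrow (q_{X}\times q_{Y})_{*}(\mathcal{F}_{l}\boxtimes\mathcal{G}_{l})$, together with the adjunction unit $\mathcal{E}\hookrightarrow (q_{X}\times q_{Y})_{*}\mathcal{E}_{l}$.
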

\begin{proof}
The proof proceeds along the same lines with Proposition \ref{lef_alg}, so we will be sketchy. Still, the proposition holds in the case $k=\mathbb{C}$, as a consequence of the topological case and passage to algebraic envelopes.  \par
By the previous general discussion, we know the morphism \ref{kun_struc_alg} is faithfully flat. By Lemma \ref{cir}, the morphism \ref{kun_struc_alg} is a closed immersion if and only if, any regular singular connection $\mathcal{E}$ is a subquotient of a connection of the form $\mathcal{F}\boxtimes\mathcal{G}=p_{X}^{*}\mathcal{F}\otimes p_{Y}^{*}\mathcal{G}$. 
We fix an arbitrary smooth compactification $(\bar{X},D)$ (resp. $(\bar{Y},E)$) of $X$ (resp. $Y$) with strict nomal crossing boundary. \par
We readily reduce to the case that $k$ is finitely generated over $\mathbb{Q}$ and fix an embedding $k\hookrightarrow \mathbb{C}$. The base change $\mathcal{E}_{\mathbb{C}}$ of $\mathcal{E}$ over $X_{\mathbb{C}}\times Y_{\mathbb{C}}$  is a subquotient $\mathcal{E}_{1,\mathbb{C}}/\mathcal{E}_{2,\mathbb{C}}$ with $\mathcal{E}_{2,\mathbb{C}}\hookrightarrow \mathcal{E}_{1,\mathbb{C}}\hookrightarrow \mathcal{F}_{\mathbb{C}}\boxtimes\mathcal{G}_{\mathbb{C}}$. The connection $\mathcal{F}_{\mathbb{C}}$ (resp. $\mathcal{G}_{\mathbb{C}}$) admits a logarithmic extension $\tilde{\mathcal{F}}_{\mathbb{C}}$ (resp. $\tilde{\mathcal{G}}_{\mathbb{C}}$) on $\bar{X}_{\mathbb{C}}$ (resp. $\bar{Y}_{\mathbb{C}}$) with poles along $D_{\mathbb{C}}$ (resp. $E_\mathbb{C}$). There is a spreading $k\hookrightarrow R\hookrightarrow \mathbb{C}$ with $R$ being finitely generated over $k$, such that $\mathcal{F}_{\mathbb{C}}$ (resp. $\mathcal{G}_{\mathbb{C}}$) descends to a connection $\mathcal{F}_{R}$ (resp. $\mathcal{G}_{R}$) in $\mathrm{Conn}(X_{R}/R)$ (resp. $\mathrm{Conn}(Y_{R}/R)$), $\mathcal{E}_{1,\mathbb{C}}$ (resp. $\mathcal{E}_{2,\mathbb{C}}$) descends to $\mathcal{E}_{1,R}$ (resp. $\mathcal{E}_{2,R}$) in $\mathrm{Conn}(X_{R}\times_{R} Y_{R}/R)$, $\tilde{\mathcal{F}}_{\mathbb{C}}$ (resp. $\tilde{\mathcal{G}}_{\mathbb{C}}$) descends to $\tilde{\mathcal{F}}_{R}$ (resp. $\tilde{\mathcal{G}}_{R}$) in $\mathrm{Conn}(X_{R}/R,D_{R})$ (resp. $\mathrm{Conn}(Y_{R}/R,E_{R})$), the inclusion $\mathcal{E}_{2,\mathbb{C}}\hookrightarrow \mathcal{E}_{1,\mathbb{C}}\hookrightarrow \mathcal{F}_{\mathbb{C}}\boxtimes\mathcal{G}_{\mathbb{C}}$ descends to $\mathcal{E}_{2,R}\hookrightarrow \mathcal{E}_{1,R}\hookrightarrow \mathcal{F}_{R}\boxtimes\mathcal{G}_{R}$, and the base change $\mathcal{E}_{R}$ of $\mathcal{E}$ along $k\hookrightarrow R$ is isomorphic to $\mathcal{E}_{1,R}/\mathcal{E}_{2,R}$. By generic flatness we find a specialization  $R\twoheadrightarrow l$ with $l$ being finite over $k$, such that $\mathcal{E}_{2,R}\hookrightarrow \mathcal{E}_{1,R}\hookrightarrow \mathcal{F}_{R}\boxtimes\mathcal{G}_{R}$ specializes to an inclusion 
$\mathcal{E}_{2,l}\hookrightarrow \mathcal{E}_{1,l}\hookrightarrow \mathcal{F}_{l}\boxtimes\mathcal{G}_{l}$ in $\mathrm{Conn}(X_{l}\times_{l} Y_{l}/l)$, with $\mathcal{F}_{l}\in \mathrm{Conn}(X_{l}/l)$ and $\mathcal{G}_{l}\in\mathrm{Conn}(Y_{l}/l)$.  We have that the base change $\mathcal{E}_{l}$ of $\mathcal{E}$ along $k\hookrightarrow l$ is a subquotient of $\mathcal{F}_{l}\boxtimes\mathcal{G}_{l}$. Note $\mathcal{F}_{l}$ (resp $\mathcal{G}_{l}$) is regular singular, since we have a logarithmic extension $\tilde{\mathcal{F}}_{l}$ (resp. $\tilde{\mathcal{G}}_{l}$) as the specialization of $\tilde{\mathcal{F}}_{R}$ (resp. $\tilde{\mathcal{G}}_{R}$).  \par
We write $q_{X}$ (resp. $q_{Y}$) to be the canonical morphism $X_{l}\rightarrow X$ (resp. $Y_{l}\rightarrow Y$). Now by adjunction $\mathcal{E}$ is a subobject of $(q_{X}\times q_{Y})_{*}\mathcal{E}_{l}$, from which we deduce that $\mathcal{E}$ is a subquotient of $(q_{X}\times q_{Y})_{*}(\mathcal{F}_{l}\boxtimes\mathcal{G}_{l})$. We conclude by noting the natural quotient $q_{X,*}\mathcal{F}_{l}\boxtimes q_{Y,*}\mathcal{G}_{l}\twoheadrightarrow (q_{X}\times q_{Y})_{*}(\mathcal{F}_{l}\boxtimes\mathcal{G}_{l})$, and $\mathcal{E}$ is then a subquotient of $q_{X,*}\mathcal{F}_{l}\boxtimes q_{Y,*}\mathcal{G}_{l}$.
\end{proof}
\subsection{A counterexample for $\pi_{1}^{diff}$}
For simplicity we further assume $k$ to be algebraically closed. \par
Let $\bar{X}$ be smooth over $k$, say of dimension $n$, $D$ be a strict normal crossings divisor, and $X$ be $\bar{X}\setminus D$. Let $x\in D$ be a point through which $m$ irreducible components of $D$ pass. We can choose étale local coordinates $x_{1},...,x_{n}$, such that the completed local ring $R=\hat{\mathcal{O}}_{x}$ is isomorphic to $k[[x_{1},...,x_{n}]]$ and $D$ is defined by $x_{1}...x_{m}=0$. We write $S=R[\frac{1}{x_{1}},...,\frac{1}{x_{m}}]$. We write differential operators $\partial_{i}=\partial _{x_{i}}$, which act on $R$ and $S$. 
\begin{defn}[Definition 3.4.6 \cite{kedlaya2011good}]
\label{admissible}
Let $(\mathcal{E},\nabla)$ be an integrable connection on $X$. An admissible formal decomposition of $(\mathcal{E},\nabla)$ at $x$ is an isomorphism 
\[(\mathcal{E},\nabla)\otimes_{\mathcal{O}_{X}} S \cong \bigoplus_{\alpha}\mathcal{E}(\phi_{\alpha})\otimes_{S} \mathcal{R}_{\alpha},\]
where $\phi_{\alpha}\in S$, $\mathcal{E}(\phi_{\alpha})$ is the differential $S$-module whose  underlying $S$-module is $S$ itself,  with the  differential action given by 
\[\partial_{i}\circ_{\phi_{\alpha}}s=\partial_{i}s+s\cdot \partial_{i}\phi_{\alpha}, 1\leq i\leq n, s\in S,\]
and where $\mathcal{R}_{\alpha}$ is a regular differential $S$-module in the sense that there exists a $R$-lattice stabilized by the actions of $x_{1}\partial_{1},...,x_{m}\partial_{m},\partial_{m+1},...,\partial_{n}$.
\end{defn}
Several remarks are in order. 
\begin{rmk}
A global notion of $\mathcal{E}(\phi)$ exists. Namely, we pick up a regular function $\phi$ on $X$, and consider the integrable connection $\mathcal{E}(\phi)=(\mathcal{O}_{X},\nabla_{\phi})$, with \[\nabla_{\phi}=\mathrm{d}+\mathrm{d}\phi.\] The connection restricts to the differential $S$-module $\mathcal{E}(\phi)$ (with the same notation!) with  $\phi$  embeded in $S$. Such a connection is called exponential, as a flat section looks like ``$e^{-\phi}$''. 
\end{rmk}
\begin{rmk}
\label{uniqueness}
 With our convention, the admissible formal decomposition is not unique. Indeed, in case $\phi_{\alpha}\in R$, we have $\mathcal{E}(\phi_{\alpha})$ to be regular, and it can be combined with $\mathcal{R}_{\alpha}$. Nevertheless, in an admissible formal decomposition, the collection of all $\phi_{\alpha}+R$ in $S/R$ is unique. See Remark 3.4.7 \cite{kedlaya2011good}.
\end{rmk}
\begin{rmk}
We ask for a ``good'' admissible formal decomposition. In the case $m=1$, a good  admissible formal decomposition is the so-called Levelt-Turrittin decomposition, which exists if we pass to a ramified extension $R[t]$ of $R$ with $t^{l}=z_{1}$.  See Theorem 3.54 \cite{van2012galois}.  For $m>1$, this is the main subject of \cite{mochizuki2009good}\cite{kedlaya2011good}, and can be achieved by iterated blow-ups with centers supported in the singular locus of the connection. In our situation, the existence of an admissible formal decomposition will be self-evident. 
\end{rmk}
\begin{emp}
\label{kun_diff}
Let $X=\mathrm{Spec}\,k[x,x^{-1}]$ and $Y=\mathrm{Spec}\,k[y,y^{-1}]$. We prove the canonical morphism 
\[\pi_{1}^{diff}(X\times Y)\rightarrow \pi_{1}^{diff}(X)\times\pi_{1}^{diff}(Y)\]
is not an isomorphism.
Consider the connection $\mathcal{E}(\frac{y}{x})$ on $Z=X\times Y=\mathrm{Spec}k[x,y,x^{-1},y^{-1}]$ as $(\mathcal{O}_{Z},{\nabla_{\frac{y}{x}}})$, with
\[\nabla_{\frac{y}{x}}=\mathrm{d}+\mathrm{d}(\frac{y}{x}).\] We will conclude by showing that $\mathcal{E}(\frac{y}{x})$ is not a subquotient of a connection of the form $\mathcal{G}\boxtimes\mathcal{H}$, which disproves the Künneth formula in this case by Lemma \ref{cir}.  Suppose this is not the case. Levelt-Turritin theorem says that after passing to a ramified extension $x\mapsto x^{p}$ (resp. $y\mapsto y^{q}$), we have an admissible decompostion of $\mathcal{G}$ (resp. $\mathcal{H}$) at the boundary point $x=0$ (resp. $y=0$)
\[\bigoplus_{\alpha} \mathcal{E}(\epsilon_{\alpha})\otimes\mathcal{R}_{X,\alpha}, \epsilon_{\alpha}\in k((x))\]
resp. 
\[\bigoplus_{\beta} \mathcal{E}(\psi_{\beta})\otimes\mathcal{R}_{Y,\beta}, \psi_{\beta}\in k((y)).\]
Consider the compactification of $Z$ into $\mathbb{P}^{1}\times\mathbb{P}^{1}$. At the point $(0,0)$, the connection $\mathcal{G}\boxtimes\mathcal{H}$ has an admissible decomposition 
\[\bigoplus_{\alpha,\beta} \mathcal{E}(\epsilon_{\alpha}+\psi_{\beta})\otimes \mathcal{R}_{X,i}\otimes\mathcal{R}_{Y,j},\]
Now an admissible decomposition of $\mathcal{E}(\frac{y^{q}}{x^{p}})$ (since we have passed to the ramified extension $x\mapsto x^{p}$, $y\mapsto y^{q}$, the fraction $\frac{y}{x}$ has gone to $\frac{y^{q}}{x^{p}}$) is self-evident.  By assumption, it is a subquotient of $\mathcal{G}\boxtimes\mathcal{H}$, so there exist some $\alpha$ and $\beta$ such that 
\[\frac{y^{q}}{x^{p}}-\epsilon_{\alpha}-\psi_{\beta}\in R=k[[x,y]],\]
by Remark \ref{uniqueness}. This is a contradiction since $\frac{y^{q}}{x^{p}}$ can not be written as the summation of $\epsilon_{\alpha}\in k((x))$, $\psi_{\beta}\in k((y))$, and an element in $k[[x,y]]$. 
\end{emp}
\section*{Statements and Declarations}
\subsection*{\normalsize Conflict of interest}
The author declares no conflicts of interests.
\subsection*{\normalsize Data availability} 
Data sharing is not applicable to this article as no datasets were generated or analysed during the current study.
 \bibliographystyle{plain}
 \bibliography{D-module}
\end{document}